\documentclass[leqno,12pt]{article}
\usepackage[margin=1.3in, top = 1.3in, bottom=1.3in]{geometry}
\usepackage[utf8]{inputenc}
\usepackage{amsmath}
\usepackage{amsfonts}
\usepackage{mathrsfs}
\usepackage{tikz}
\usepackage{stmaryrd}
\usepackage{enumerate}
\usepackage{tikz-qtree}
\usetikzlibrary{automata,positioning}
\usepackage{amssymb}
\usepackage{amsthm}
\usepackage{bbm}
\usepackage{mathtools}
\usepackage{algorithm}
\usepackage{algpseudocode}
\usepackage{diffcoeff}
\usepackage{interval}
\usepackage{setspace}
\usepackage{graphicx}
\usepackage{comment}
\usepackage{fancyhdr}
\usepackage{tikz}
\usepackage{tikz-3dplot}
\usepackage{enumitem}
\usepackage{hyperref}
\hypersetup{hidelinks}
\usetikzlibrary{shapes,arrows}
\theoremstyle{definition}
\newcommand{\R}{\mathbb{R}}
\newcommand{\st}[1]{\left\{#1\right\}}

\newcommand{\N}{\mathbb{N}}

\newcommand{\from}{\colon}
\newcommand{\seq}{\subseteq}

\newcommand{\clos}{\overline}

\DeclareMathOperator{\conv}{conv}

\newtheorem{theorem}{Theorem}[]

\newtheorem{corollary}{Corollary}[]
\newtheorem{proposition}{Proposition}[]
\newtheorem{lemma}[]{Lemma}

\tikzstyle{decision} = [diamond, draw, fill=blue!20, 
text width=6em, text badly centered, node distance=3cm, inner sep=0pt]
\tikzstyle{block} = [rectangle, draw, fill=blue!20, 
text width=9em, text centered, rounded corners, minimum height=4em]
\tikzstyle{block2} = [rectangle, draw, fill=green!20, 
text width=9em, text centered, rounded corners, minimum height=4em]
\tikzstyle{line} = [draw, -latex']
\tikzstyle{cloud} = [draw, ellipse,fill=red!20, node distance=3cm,
minimum height=4em]

\counterwithin{figure}{section}

\begin{document}
\title{\vspace{-2.5cm} A finite set with noncompact closed convex hull in a Hadamard space}
\author{Ariel Goodwin
	\thanks{Center for Applied Mathematics, Cornell University, Ithaca, NY, USA.
	\texttt{awg77@cornell.edu}  Research supported in part by the NSERC Postgraduate Fellowship PGSD-587671-2024.}
}
 
\date{September 29, 2026}

	\maketitle

	\begin{abstract}
		We show that there is a Hadamard space in which the closed convex hull of a set containing merely three points  
		is noncompact, by providing an AI-generated construction. The seemingly innocuous problem of finding a compact or finite set with this property, or showing that none 
		exists, was recorded by Gromov 
		and periodically revisited by researchers in metric geometry, functional analysis, fixed-point theory, and geometric group theory.
		Nevertheless, it remained unanswered for over three decades. 
		The construction itself is based on building an inductive sequence of CAT(1) metric graphs and taking 
		the Euclidean cone over the limiting space. We discuss the history of this problem,
		the main ideas and details of the construction, and its relationship with existing techniques.
	\end{abstract}

\noindent{\bf Key words:} convex hull, metric geometry, Hadamard space, nonpositive curvature
\medskip

\noindent{\bf AMS Subject Classification:}  51F99, 52A05, 54G20, 51-08, 53C23
\medskip

	\section{Introduction}
	Complete geodesic metric spaces of nonpositive curvature, in the CAT(0) sense, 
	are commonly known as Hadamard spaces. The main examples of CAT(0) spaces are Euclidean and Hilbert spaces, Hadamard manifolds (complete, simply-connected, Riemannian manifolds
	of nonpositive sectional curvature) such as hyperbolic space and the cone of positive-definite matrices with its affine-invariant metric \cite{Boumal2023}, 
	metric trees \cite{Bridson1999, Handler1973}, and CAT(0) cubical complexes \cite{Ardila2012}.
	Such spaces, though seemingly abstract at first glance, are characterized by the strong convexity of their squared distance functions, and 
	thus share many similarities with Euclidean and Hilbert spaces when it comes to questions of analysis, geometry, convexity,
	and optimization \cite{Bacak2014}. At the same time, there are some notable differences between Euclidean convexity and geodesic convexity
	in Hadamard spaces that challenge our Euclidean intuition. Examples of these phenomena include the 
	pathological behavior of certain classical optimization algorithms \cite{LytchakPetrunin2022, criscitiello22},  
	the existence of three-point sets whose convex hull has dimension greater than two and thus differs from the set of weighted Fr\'echet means \cite{LubiwMaftuleacOwen2020ConvexHulls, 
	LinEtAl2017ConvexityTreeSpaces, bessenyei2021sandwich,Goodwin2025,
	alexander2024alexandrov,kleiner1999local}, and a surprising discrepancy between 
	inner and outer characterizations of convexity \cite{borisenko1999total,santalo1972averages} with ramifications to the convergence analysis
	of certain subgradient-like methods \cite{GoodwinSubgrad2026,
	 goodwin2026circumcenters,zhang2016firstorder}.
	
	Straddling the boundary of what we know about geometry in Hadamard spaces is a fundamental question about the compactness of convex hulls.
	Specifically, it has remained unknown whether the closed convex hull of a compact --- or even finite --- set is itself compact. 
	It was known 
	to be true for all
	compact sets if and only if it was true for all finite sets \cite[Lemma 2.18]{duchesne2016groups}. 
	This apparent gap in our understanding
	contrasts strongly with its positive and essentially elementary resolution in Hilbert (and Banach) spaces \cite[Exercise 1.62]{fabian2011banach}.
	To our knowledge, the question in Hadamard spaces dates back to Gromov \cite[6.B$_1$(f)]{Gromov1993}, 
	has been frequently revitalized by Petrunin \cite{petrunin2009pigtikal,LytchakPetrunin2022,Petrunin2025Gallery} (attributing it to mathematical
	folklore predating Gromov's \cite[6.B$_1$(f)]{Gromov1993}), and has more recently 
	been highlighted in a comprehensive survey by Ba\v{c}\'a{k} \cite{Bacak2023}. In each of 
	\cite{Monod2016,BacakMeans2014,LytchakPetruninConvex2022,KopeckaReich2007,
	Berdellima2020Investigations,Berdellima2021CompactSets, 
	Goodwin2025,Basso2025SomeQuestions, LytchakPetrunin2023WeakTopology,duchesne2016groups} it is noted that the 
	compactness of a closed convex hull in a Hadamard space remains unclear, and occasionally speculated that there could be
	noncompact examples. 
	Spaces in which the result is true are sometimes referred to as spaces with property (CH): see
	\cite{jost1994equilibrium} where this property has implications on the regularity of energy minimizers in Hadamard spaces.
		
	Understanding the behavior of convex hulls in nonlinear spaces 
	is a delightfully delicate endeavor. In \cite{Monod2016}, Monod shows that there exists a bounded Hadamard space
	with no extreme points, and thus a natural analogue of the Krein-Milman theorem fails in the Hadamard setting. Lytchak and Petrunin in
	\cite{LytchakPetruninConvex2022} demonstrate that, generically, the convex hull of three points in a Riemannian manifold
	has nonempty interior. For two-dimensional CAT(0) cubical complexes there are some positive results 
	on the compactness of convex hulls
	\cite{Burnell2006CompactnessConvexHulls}, and algorithms for computing 
	convex hulls of finite sets \cite{LubiwMaftuleacOwen2020ConvexHulls}.
	Basso, Krifka, and Soultanis recently showed in \cite{BassoKrifkaSoultanis2024NoncompactHull} that if one relaxes the notion of nonpositive curvature from
	CAT(0) to that induced by a conical geodesic bicombing, the closed convex hull of finitely many points need not be compact. 
	That brings us to this paper, in which we show:
	
	\begin{center}
		\textbf{Theorem:} \textit{There is a Hadamard space containing a set of three points with noncompact closed convex hull.}
	\end{center}
	Since the convex hull of two points is the geodesic between them, we see at last that the 
	convex hull of a finite set in a Hadamard space can exhibit pathological behavior in the smallest
	nontrivial case.
	In the first version of this paper we presented a weaker version of the theorem, discovered by AI, providing only a finite set with 
	large and unspecified cardinality. 
	The improvement to three points was found shortly thereafter, again with AI.
	The main idea in both examples is exactly the same: to consider an inductively defined metric graph with cleverly 
	chosen edge-lengths and no cycles shorter than $2\pi$, then pass to the
	Euclidean cone over this metric graph to obtain a CAT(0) space from a CAT(1) space.
	The first example used a needlessly 
	strong result from graph theory to instantiate a graph with a 
	large number of vertices and no short cycles \cite{ErdosSachs1963}. It was the
	author's original intent to include both the first example and the three-point 
	example in the same manuscript to better illustrate the process of discovery. However, after seeing 
	how much the presentation 
	could be simplified by using a different and explicit initial graph, it was decided to discuss only the three-point example in this 
	paper and leave the first example archived for posterity \cite{goodwin2026finitesetnoncompactclosed}.
	A comparison with the earlier version will reveal that the strategy remains completely identical, just
	using slightly different arguments to show that the initial graph guarantees
	infinite growth.
	In parallel with the
	author's preparation of the three-point refinement, Lytchak also upgraded the finite set
	example to a three-point example in \cite{lytchak2026noncompact}, sharing many common features with the construction 
	presented here\footnote{Our decision to
	continue writing this updated version of \cite{goodwin2026finitesetnoncompactclosed} to include the three-point example, which we had discovered and had been working on
	writing up when Lytchak's paper appeared,
	was encouraged in communication with Lytchak.}.
	 Lytchak's manuscript also mentions
	several interesting topological and geometric properties of the space in which the counterexample lives.

	In the rest of the paper we walk through the solution to this problem, found using AI, showing that there
	is a Hadamard space such that the closed convex hull of a particular three-point set is not compact. This problem, with its nearly elementary statement, has provided a 
	rich source of interest for mathematicians. To inject some personal sensibility, there are a few aspects that make the original question and its
	resolution so striking. For one, convexity is a focal point for many diverse areas of pure and applied mathematics, including the author's own research interests.
			  Geodesic convexity is the obvious generalization of convexity to nonlinear spaces, and in many respects 
			  Hadamard spaces are very well-behaved when it comes to the study of geodesically convex sets and functions. But knowing that the convex hull of a mere three points in a Hadamard space
			  can behave so unintuitively provides a significant data point in support of seeking alternative notions of convexity that 
			  are closer to our Euclidean ideals. It also invites questions about what additional structure besides
			  nonpositive curvature,
			  such as smoothness, can be imposed to ensure that convex hulls behave as they do in linear spaces.

		Furthermore, the problem statement itself is extremely simple and provided enough leeway in either direction
		 that experts in metric geometry were unsure which way it might resolve. 
		 To paint with broad strokes, in classical convex analysis
		we are spoiled by theorems about convex objects having simple and elegant proofs. When things seem to 
		break down, there are a handful of usual counterexamples to guide our thinking 
		(though we are still finding more in recent years \cite{BoltePauwels2022}).  
		The example presented in this paper serves to expand our toolbox of familiar cases upon which to test questions about convex analysis and
		optimization in nonlinear spaces.

		Lastly, the example is nontrivial but highly accessible. It is clear that it employs a careful combination of 
			  ideas, decisions, and parameters, which in the author's opinion are impressive and surprising. At the same time, the tools used 
			  are completely familiar to those in the field, and certainly within reach for a 
			  nonspecialist graduate student interested in geometry. The techniques and cited results used in constructing the 
			  example either come from elementary graph theory or have long been canonized in standard textbooks on
			  metric geometry such as \cite{Bridson1999, BuragoBuragoIvanov2001}.
			  The author has strived to 
			  formulate and communicate the presented arguments in a lucid way, perhaps bordering occasionally 
			  on being too detailed.
			  This reflects a deliberate choice to make the result approachable for a wide audience.  
	
	In our view, the techniques used 
	are simple and elegant enough to warrant further reflection and internalization. It seems in retrospect that they were not
	far out of reach. We hope that this work provides a fruitful point of departure for the mathematical community to better understand convexity in nonlinear spaces.

	\subsection*{Acknowledgements}
	The author would like to thank Adrian Lewis, Genaro L\'opez-Acedo, and Adriana Nicolae
	for many valuable conversations on the subjects of 
	convex optimization and metric geometry. The author owes
	thanks more broadly to many people whose work in these areas has been both 
	inspirational and educational. This includes, but is not limited to,
	Anton Petrunin, Miroslav Ba\v{c}\'{a}k, Nicholas Pischke, Vitali Kapovitch, Alexander 
	Lytchak, Nicolas Monod, and Elefterios Soultanis, who have been 
	warmly receptive to these recent developments and whose feedback has been most valuable. 

	\section{Preliminaries}
	We recall a few standard notions from metric geometry and graph theory; 
	some authoritative references are \cite{Bridson1999,BuragoBuragoIvanov2001} and
	\cite{diestel2025graph}, respectively.
	A metric space $(X,d)$ is a \textit{geodesic space} if every two points $x,y\in X$ can be joined by a
	\textit{geodesic}, which is to say a map $\gamma$ from a closed interval $[a,b]$ into $X$ with $d(\gamma(t),\gamma(t')) = |t-t'|$
		for all $t,t'\in [a,b]$. A geodesic space is said to be \textit{CAT(0)} if the map 
		\[t\mapsto d(\gamma(t),a)^2-t^2\] is convex for every point $a\in X$ and every geodesic $\gamma$. A \textit{Hadamard space} is a complete CAT(0) space. 
		In a Hadamard space, a geodesic joining two given points is unique. 
		A subset $C$ of a Hadamard 
		space $X$ is \textit{geodesically
		convex} if the geodesic between any two points in $C$ is contained in $C$. A metric space $(X,d)$ is \textit{proper} if the closed ball $B_r(x)$ is compact for every point $x\in X$ 
		and radius $r > 0$.
		The \textit{convex hull} of a set $C$ of a Hadamard space is the smallest geodesically convex set containing 
		$C$, denoted by $\conv(C)$. The \textit{closed convex hull} of such a set $C$ is the smallest closed convex set containing $C$,
		denoted by $\clos{\conv}(C)$. We will also encounter CAT(1) spaces in the construction, but it is sufficient for us to 
		work with such spaces through their properties rather than a formal definition (which can be found in \cite{Bridson1999}). Informally, a CAT(1) 
		space is a geodesic space of curvature at most 1, where curvature is measured by comparing triangles in the space with corresponding
		triangles in the sphere $S^2$ of radius 1. Since CAT(0) signifies curvature at most 0, 
		CAT(0) spaces are also CAT(1). 

	A graph is an ordered pair $G = (V,E)$ of vertices $V$ and edges $E$, and the cardinality of the vertex set $|V|$ is
	called the \textit{order} of $G$. Sometimes we may use the notation $V(G)$ and $E(G)$ to indicate the
	vertex and edge sets, respectively, of a particular graph $G$. A graph is \textit{connected} if any two of its vertices are joined by a finite 
	sequence of edges, called
	a \textit{path}. The \textit{combinatorial distance} between two vertices in a graph is the number of edges in a shortest path connecting them. 
	A vertex $v\in V$ is said to have \textit{degree} $\deg(v)=d$ if it has $d$ neighbors. Sometimes we will use 
	the notation
	$\deg_G$ to specify the degree relative to a specific graph $G$.
	At the intersection of metric geometry and graph theory are spaces called \textit{metric graphs}, which can be understood
	intuitively as connected graphs whose individual edges are metrized by identifying them with compact intervals of the real line. 
	Such metric graphs will be called \textit{finite} if their edge and vertex sets are finite.
	An \textit{edge segment} of an edge in a metric graph refers to
	a subset of that edge which is the image of a compact subinterval of the (potentially larger)
	interval that parametrizes that edge. The edge segment between
	two points $x$ and $y$ belonging to a common edge will be denoted by $[x,y]$ (or with open brackets to exclude endpoints).
	The underlying (pseudo)metric, called the \textit{path length pseudometric},
	on the whole graph is defined by taking infimums over lengths of piecewise linear paths between points in the graph; once more we defer a rigorous presentation of the   
	definition to \cite[I.1.9]{Bridson1999}. Unless specified otherwise, length of a path or an edge
	will refer to this continuous notion of length rather than the combinatorial length that counts
	the number of edges.

	\section{Construction of a noncompact convex hull}

	We briefly describe the construction at a high level.
	The construction starts by considering an inductively defined sequence of metric
	graphs. The inductive procedure adjoins children to certain pairs of 
	parent vertices in the current graph such that 
	\begin{enumerate}[label=(\alph*)]
		\item sets of parents have a unique child in the next step of the construction,
		\item the graph never admits any cycles of length less than $2\pi$, which  
				amounts to maintaining the CAT(1) property (an upper curvature bound of 1), and
		\item each step adds a finite but nonzero number of new vertices and edges.
	\end{enumerate}
	Each such graph is naturally viewed as a subset of the next, 
	so we can make sense of the infinite union of all the graphs in the sequence to obtain a space $Y$. 
	We will show that $Y$ itself is a complete
	CAT(1) metric graph. Then the actual Hadamard space $X$ we will consider is the \textit{Euclidean cone} over this CAT(1) space. We will define the Euclidean
	cone as needed
	in Subsection \ref{finiteset}.
	The Euclidean cone over a complete CAT(1) space is a Hadamard space by a theorem of Berestovskii \cite[Theorem II.3.14]{Berestovskii1983,Bridson1999}. 
	In this step, 
	the edge lengths in the graph $Y$ translate to angles in the cone $X$. The choice of
	slowly vanishing edge lengths in $Y$ pays off by creating a set of 
	points $Q$, placed at specific radii away from the tip of the cone, that is not
	totally bounded. All points in $Q$ arise as iterated geodesic midpoints
	from an initial finite set of points corresponding to a subset of $Y$, which motivates the
	iterated parent-child structure. 
	The paper proceeds in subsections that roughly follow this informal roadmap. 
	Also, the closed convex hull of a compact set in a proper Hadamard space is compact
	for elementary reasons, so the counterexample cannot be proper \cite{Petrunin2009ConvexHull}.

	\subsection{The example of Basso-Krifka-Soultanis}
	Before getting into the details, it is well worth highlighting the nearest predecessor to this construction in the literature, 
	constructed by Basso, Krifka, and Soultanis \cite{BassoKrifkaSoultanis2024NoncompactHull}. 
	Their example is situated in complete metric spaces $(X,d)$ that admit a \textit{conical geodesic bicombing}, which is a family of (linearly reparametrized)
	geodesics
	$\st{\sigma_{xy}}_{x,y\in X}$ (the subscripts $x$ and $y$ are the respective start and end points of a given geodesic $\sigma_{xy}$) 
	that obey a convexity inequality with respect to the metric:
	\[d(\sigma_{xy}(t),\sigma_{x'y'}(t)) \leq (1-t)d(x,x') + td(y,y') ~ ~\text{ for all } x,y,x',y' \in X,~  t\in[0,1].\]
	Hadamard spaces are examples of spaces admitting a conical geodesic bicombing. A set is \textit{$\sigma$-convex} if it is closed under 
	the conical geodesic between any two points within it, and the \textit{$\sigma$-convex hull} is defined accordingly.
	In some sense the earlier example is motivated by a suspicion that there is a finite set in a Hadamard space whose closed convex hull is noncompact: the space constructed in one of their main results, if 
	it were compact, would imply the compactness of closed convex hulls for all finite sets in Hadamard spaces \cite[p.\ 5865]{BassoKrifkaSoultanis2024NoncompactHull}.
	However, they remark that their construction seems ill-suited for
	generating spaces that are actually Hadamard, and not just equipped with a conical geodesic bicombing. 
	
	Certainly, the counterexample in Hadamard space is constructed in a fashion that evokes that of the Basso-Krifka-Soultanis example:
	\begin{enumerate}
		\item Choose an initial set $V_0$ of specified cardinality.
		\item Inductively, construct an infinite sequence of sets
		\[V_0 \rightarrow V_1 \rightarrow V_2 \rightarrow \cdots\]
		by defining ``midpoints" of some subset of the previous points.
		\item Show that a limiting space defined by the sets $\st{V_n}_{n=0}^\infty$ can be made into a metric space $(V,d)$ with certain properties.
		\item Pass from the space $(V,d)$ to a nonpositively curved metric space $(X,d')$.  
		\item Exhibit a subset of points in $(X,d')$, derived from midpoints in $(V,d)$, that is not totally bounded. This is achieved by the way $V$ is constructed.
		\item Finally, the initial set $V_0$ induces a subset of $X$ whose convex hull contains the unwieldy set of midpoints described in the previous step.
	\end{enumerate}
	On the other hand, there are some notable differences.
	\begin{enumerate}

		\item Much of the geometric structure in the example of \cite{BassoKrifkaSoultanis2024NoncompactHull} 
		is imposed in a ``top-down'' fashion, as opposed to a ``bottom-up'' approach in our presentation.  
		For example, the metric $d$ on $V$ and its conical geodesic bicombing on the completion of $(V,d)$ are determined at the end of the 
		construction. We operate instead with a sequence of geodesic metric spaces that are constructed with 
		carefully chosen midpoints to inductively preserve a curvature bound. 

		\item Basso-Krifka-Soultanis's construction starts with a preliminary metric space defined from repeated midpoints
		and turns it into a space admitting a conical geodesic bicombing 
		by taking the metric completion.  
			  The construction we present differs in that the space obtained from repeatedly adjoining midpoints is CAT(1), and we get the CAT(0) space
			  of interest by moving to the Euclidean cone over the space. This is a more transformative operation than completion.
	\end{enumerate}
	
	\subsection{Building metric graphs with long cycles} 
	\label{longcycle}

	Let us introduce a sequence of positive parameters $\st{\alpha_n}_{n=0}^\infty$
	which will play a geometric role in the construction. Initially, they will serve as edge lengths in a growing
	sequence of metric graphs. Later, after passing to the Euclidean cone over this limiting space, these edge lengths 
	will be interpreted rather as angles. In any case, we will choose these parameters explicitly as follows. Fix the integer $M = 200$ and
	set 
	\[\alpha_0 = \frac{M\pi}{2M+1}, \quad \quad \alpha_n = \frac{M\pi}{2M + n} ~ ~ (n\geq 1).\]
	Our choice of $M$ is somewhat arbitrary and it will later be clear that we just need $M$ to be sufficiently large
	to ensure that the inductive construction never terminates. 
	The main properties we will need from the sequence of edge lengths are that it is nonincreasing and
	\[0 < \alpha_n < \frac{\pi}{2}, \quad \sum_{n=0}^\infty\alpha_n = \infty, \quad \sum_{n=0}^\infty \alpha_n^2 < \infty.\]	
	We further instantiate variables for the partial sums of the sequence:
	\begin{equation}
		\label{scalars}
		s_0 = 0, \quad s_n = \sum_{k=0}^{n-1}\alpha_k ~ ~ (n\geq 1).
	\end{equation}
	In particular, $s_n\to \infty$ as $n\to \infty$ by the nonsummability of $\st{\alpha_n}_{n=0}^\infty$.

	First, we construct a sequence of metric graphs whose limit, in some sense, is a complete CAT(1) space.
	The initial graph in this sequence is obtained by choosing a graph of sufficiently large order.	
	The construction used to prove the next theorem proceeds in inductive generations by adding
	vertices and edges to an existing metric graph to obtain the next metric graph in 
	the sequence. 
	In each generation, any newly added vertex is connected to exactly two vertices in the preceding
	generation. We will refer to the two preceding vertices as its \textit{parents}, and to 
	the vertex born of a given pair of parents as their \textit{child}.

	Begin with a set of $2M = 400$ vertices $V_0 = \st{a_1,\dots,a_{2M}}$ connected by edges
	of length $\delta = \frac{5\pi}{4M}$ between $a_i$ and $a_{i+1}$ for $i=1,\dots,2M$ (mod $2M$).
	One can interpret this as a circle $C$ of circumference $5\pi/2$ and $2M$ equally spaced points
	along it. We will refer back to this cycle $C$ as the (initial) circle.
	Now add $M$ children $V_1 = \st{b_1,\dots,b_M}$ 
	such that $b_i$ is the child of $a_i$ and $a_{i+M}$, with connecting edges of
	length $\alpha_0$. This gives $2M$ new edges, which can be viewed as $M$ subdivided chords of the circle 
	$C$. We call $(G_0,d_0)$ the metric graph obtained so far, with vertex set $V(G_0) = V_0 \cup V_1$,
	all circle edges having length $\delta$, and all parent-child edges having length $\alpha_0$.
	This initial graph is illustrated in Figure \ref{initgraph} below.
		\begin{figure}[h!]
			\begin{center}
			\includegraphics[scale=0.8]{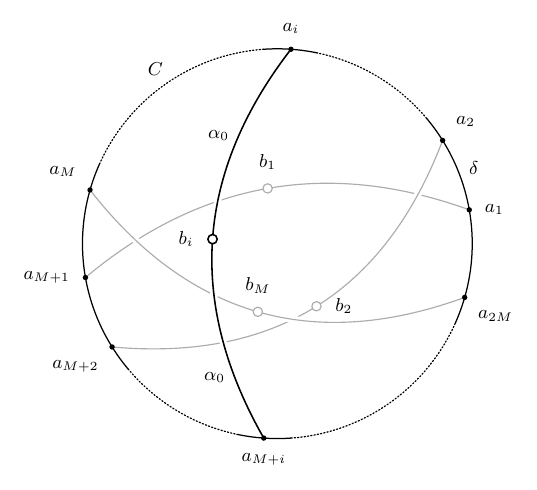}
			\caption{The initial metric graph $(G_0,d_0)$.}
			\label{initgraph}
			\end{center}
		\end{figure}
	\begin{theorem}
		\label{graphsequence}
		There exists a sequence of connected finite metric graphs $\st{(G_n,d_n)}_{n=0}^\infty$, where $d_n$ is the usual shortest path metric
		on $G_n$ for each $n\geq 0$,
		 with the following properties:
		\begin{enumerate}[label=(\alph*)]
			\item $G_0$ is the metric graph specified in the preamble to this theorem.
			\item $G_{n+1}$ is obtained from $G_n$ by the addition of new child vertices, each joined by an edge to 
			each of its two parents in $G_n$, and 
			thus $G_n$ can be viewed naturally as a subset of $G_{n+1}$ for all $n\geq 0$. 
			\item Every parent-child edge in $G_n$ has length at least $\alpha_n$.
			\item Every vertex in $G_n$ has degree at most 6.
		\end{enumerate}
	\end{theorem}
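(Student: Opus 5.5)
The plan is to build the graphs inductively by a deterministic rule and then check properties (a)--(d) one at a time. Property (a) holds by fiat, since we set $G_0$ equal to the graph in the preamble. For $G_0$ itself, the parent--child edges have length $\alpha_0$, so (c) holds at $n=0$. Each $a_i$ has two circle neighbours and exactly one child ($b_i$ when $i\le M$, $b_{i-M}$ otherwise), so $\deg_{G_0}(a_i)=3$. Each $b_i$ has degree $2$. Hence (d) holds at $n=0$.

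For the inductive step, suppose $G_n$ has been built and satisfies (b)--(d). Let $P_{n}$ be a set of unordered pairs $\{p,q\}$ of distinct vertices of $G_n$, chosen according to an admissibility rule described below. For each pair in $P_n$ we add one new vertex $c_{pq}$ (the unique child of that pair) together with two edges $[p,c_{pq}]$ and $[c_{pq},q]$, each of length $\alpha_{n+1}$. Let $G_{n+1}$ be the resulting finite metric graph with its shortest-path metric $d_{n+1}$.

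Several properties follow at once from this rule. Connectedness and finiteness are preserved, because each new vertex is attached to $G_n$ and $P_n$ is finite. Property (b) holds by construction. The inclusion $G_n\subseteq G_{n+1}$ is an isometric embedding as a subset of the metric graph, although we only need it to be a subset.

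Property (c) follows from monotonicity of the parameters. Every parent--child edge of $G_{n+1}$ is either an old edge, of length at least $\alpha_n\ge\alpha_{n+1}$, or a new edge, of length exactly $\alpha_{n+1}$. Here we use that $\{\alpha_n\}$ is nonincreasing: $\alpha_1=M\pi/(2M+1)=\alpha_0$ and $\alpha_n$ decreases thereafter.

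The degree bound (d) is handled by bookkeeping built into the admissibility rule. We require that every vertex appears as a parent in at most $3$ pairs over the entire construction, counting pairs in all of $P_0,P_1,\dots$ and including the child it already has in $G_0$. We also require that a vertex is used in at most one pair of any single $P_n$, although this is not essential. With this rule:
\begin{itemize}
\item a vertex of $V_0$ has degree $2$ from the circle plus at most $3$ children, so at most $5$;
\item every other vertex has degree $2$ from its own parents plus at most $3$ children, so at most $5$.
\end{itemize}
In all cases the degree is at most $6$. Each time a vertex is placed in a pair, we decrement its remaining ``parent budget''. Vertices whose budget is exhausted are simply never chosen again.

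The genuine difficulty is not the four listed properties. The main obstacle is making the construction meaningful for its later use: each $P_n$ must be nonempty, as in item (c) of the roadmap, and no cycle of length less than $2\pi$ may ever be created. For the girth, I would restrict admissible pairs to those with $d_n(p,q)\ge 2\pi-2\alpha_{n+1}$. A new cycle through a single child then has length at least $2\alpha_{n+1}+d_n(p,q)\ge 2\pi$. For cycles through several new children, I would argue that replacing each detour $p\to c\to q$ by a $G_n$-geodesic yields a closed walk in $G_n$, and then use the girth of $G_n$ together with the lower bound on $d_n(p,q)$. This needs care, and it may force choosing the pairs in $P_n$ far apart from one another.

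For nonemptiness, the natural choice is to pair the two children born from ``antipodal'' parents, mimicking the $a_i,a_{i+M}$ pattern. One then checks that the initial circle of length $5\pi/2$, with $M=200$ large, guarantees that such pairs remain at distance roughly $\pi$ minus a small correction controlled by $\alpha_{n+1}$. The admissibility inequality can therefore be satisfied at every generation. This estimate is the step I expect to require the real work.
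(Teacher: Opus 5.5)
Your verification of (a)--(d) is correct and follows the paper's reasoning. Property (c) follows because $\st{\alpha_n}$ is nonincreasing (and indeed $\alpha_1=\alpha_0$), and (d) follows from a cap on the number of children. You cap at $3$ and get degree at most $5$; the paper lets each vertex of $V_n$, $n\ge 1$, take up to $4$ children, giving degree at most $6$, while vertices of $V_0$ gain no children after $G_0$.

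Note, though, that the statement is so weak that taking every $P_n=\emptyset$ would already satisfy (a)--(d) literally. The substance of the paper's proof is the specific construction, which Theorems \ref{exponentialvertices}, \ref{cat1} and \ref{counterexample} later rely on. There is also one slip in your verification: $G_n\hookrightarrow G_{n+1}$ is not isometric. Under your own admissibility rule each new child is a shortcut, $d_{n+1}(p,q)\le 2\alpha_{n+1}<d_n(p,q)$. Only $d_{n+1}\le d_n$ holds, and that is all that is used later.

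Where your construction departs from the paper's, the steps you defer would fail as sketched. \emph{Girth.} Adding all of $P_n$ at once, with admissibility tested in the old metric $d_n$, does not protect the girth. Three admissible pairs $\{p,q\},\{q,r\},\{r,p\}$ create a cycle of length $6\alpha_{n+1}<2\pi$ as soon as $\alpha_{n+1}<\pi/3$. Making the pairs disjoint does not fix this. Suppose $p_1,q_2$ are newest-generation siblings, and so are $q_1,p_2$, with the two common parents far apart. Then the admissible pairs $\{p_1,q_1\},\{p_2,q_2\}$ close a cycle of length $4\alpha_n+4\alpha_{n+1}$. Your reduction, replacing each detour by a $G_n$-geodesic, only produces a long closed walk in $G_n$. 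That says nothing about the new cycle, whose old arcs between consecutive new children may be short. The paper instead adds children one at a time and tests condition (ii) in the current metric $d_{\text{cur}}$. On any cycle whose newest vertices lie in $V_N$, the last-added such vertex $w$ has its parents $u,v$ joined by the rest of the cycle, a path that already existed when $w$ was added. Hence the cycle has length at least $(2\pi-2\alpha_{N-1})+2\alpha_{N-1}=2\pi$ (Theorem \ref{cat1}).

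\emph{Nonemptiness.} Your heuristic aims for distances ``roughly $\pi$'', but admissibility demands at least $2\pi-2\alpha_{n+1}>\pi$. Moreover, there is no antipodal structure to exploit after the first generation. The paper runs the greedy augmentation to saturation. The vertices of $V_n$ with fewer than $4$ children are then pairwise closer than $2\pi-2\alpha_n$, so they lie in a ball of combinatorial radius $\lfloor 2+2n/M\rfloor$. The handshaking lemma then gives $N_{n+1}\ge 2(N_n-150q^n)$ with $q=5^{2/M}$ (Theorem \ref{exponentialvertices}).

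\emph{Parent generations.} The paper lets only the newest generation $V_n$ have children, so both parents of a child share the height $s_n$ and the cone radius $r_n$. This is what makes $h$ $1$-Lipschitz (Lemma \ref{h-Lipschitz}) and makes $[r_{n+1},w]$ the midpoint of $[r_n,u]$ and $[r_n,v]$. Your budget ``over the entire construction'' lets older vertices become parents later, which breaks that bookkeeping.
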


	\begin{proof} 

	The recursive construction of these graphs occurs explicitly, in generations.

	Generation 0:	
	Starting with the metric graph $(G_0,d_0)$, 
	properties (a), (c) and (d) of the theorem are obvious at this point.

	Generation $n \geq 1$: The rest of the construction is recursive, requiring us to repeatedly augment 
	$G_{n-1}$ to obtain $G_n$. We assume at each generation $n \geq 1$ to have already constructed a connected 
	finite
	metric graph $G_{n-1}$ and the most recently added vertex set $V_{n}$. 
	Now we describe the augmentation procedure, in which we add children to certain pairs
	of vertices in $V_n$.
	In particular, 
	the number of children of a vertex
	$v \in V_n$ starts as 0 and may change multiple times during the augmentation procedure.
	Starting from $G_{n-1}$: 
	\begin{enumerate}[label=(\arabic*)]
		\item Choose a pair of vertices $u,v \in V_n$ that satisfy both of the following properties:
		\begin{enumerate}[label=(\roman*)]
		\item Each of $u$ and $v$ has fewer than 4 children.
		\item The distance between $u$ and $v$ is at least $2\pi -2\alpha_n$ when measured in the path metric
		of the current graph, $d_{\text{cur}}$.
		\end{enumerate}
		\item For such a pair $u,v\in V_n$ we add a new child vertex $w$ joined to each of $u$ and $v$ by an edge
		of length $\alpha_n$.
	\end{enumerate}
	\begin{figure}[h!]
	\begin{center}
	\includegraphics{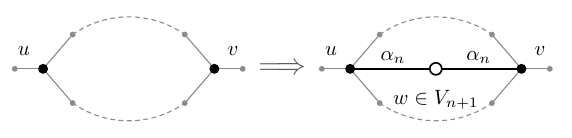}
	\caption{Two vertices $u$ and $v$ in $V_n$ that each have fewer than 4 children and are currently at least 
	distance $2\pi-2\alpha_n$ from each other are joined through a child.}
	\label{augment}
	\end{center} 
	\end{figure}
	Note that doing so 
	may change the path metric by adding shortcuts. After successfully completing Step (2) and potentially changing the
	underlying path metric, go back to Step (1). If it is no longer possible to do Step (1), the augmentation procedure 
	terminates. More importantly, the augmentation procedure always terminates because the 
	degree constraint does not allow us to attach more than 4 edges to each of the finitely many vertices in
	$V_n$. To be clear, no pair of
	vertices in $V_n$ can have more than one common child because condition (ii) in the augmentation procedure
	is never satisfied if two vertices already have a common child (see Figure \ref{augment}).
	To finish the recursive step, 
	we define the next generation of vertices $V_{n+1}$ 
	to be the set of all children added during this generation and declare $G_n$ to be the metric
	graph obtained when the augmentation terminates. In particular, $V(G_n) = \bigcup_{j=0}^{n+1}V_j$.
	
	Clearly $G_n$ is connected since $G_0$ was connected and the augmentation 
	procedure only ever adds edges connected to existing 
	parts of the graph. Thus (b) and (c) are evident from the way we constructed $G_n$.
	Finally, (d) follows because a vertex in $V_n$ initially has degree 2 (it is a child of exactly two parents
	in $G_{n-1}$) 
	and then has at most 4 edges added to it during the augmentation procedure. 
\end{proof}
	
	Some comments are in order. 
	The choice of edge lengths in the initial graph will be used to 
	show that all cycles in $G_0$ have length at least $2\pi$, and inductively ensure that all cycles in 
	the to-be-constructed space $Y$
	have length at least $2\pi$. This will be used in turn to show that $Y$ is CAT(1).
	Also, since Theorem \ref{graphsequence}(b) allows us to naturally view $G_n$ as a subset of $G_{n+1}$, it is clear
	that \[d_{n+1}(x,y) \leq d_n(x,y) \text{ for all } x,y\in G_n\]
	because the addition of new edges means that the $G_{n+1}$ distance between two points in $G_n$ can only decrease.
	It is convenient for future bookkeeping to keep track of an auxiliary sequence of graphs $H_n$ comprising
	the vertex set $V_n$
	with edges between any vertices in $V_n$ that have a common child.  This graph $H_n$ is agnostic to the 
	existing structure of $G_{n-1}$ and could even be disconnected. Note that $|V_{n+1}| = |E(H_n)|$ by definition.
	Now we show that all cycles in the initial graph $G_0$ are sufficiently long.
	\begin{lemma}
		\label{longcycles}
		Every cycle in $G_0$ has length at least $2\pi$.
	\end{lemma}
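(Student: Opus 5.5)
The plan is to classify cycles in $G_0$ by how many chords they use, and to bound the length of each class directly from the explicit edge lengths. The key structural fact is that each child $b_i$ has degree exactly $2$ in $G_0$, with neighbours $a_i$ and $a_{i+M}$. So a (simple) cycle passing through $b_i$ must traverse both edges at $b_i$, and therefore uses the whole subdivided chord from $a_i$ to $a_{i+M}$, of length $2\alpha_0 = \frac{2M\pi}{2M+1}$. Moreover, the $M$ chords have pairwise disjoint endpoint pairs $\st{a_i,a_{i+M}}$. Let $k\ge 0$ denote the number of chords a given cycle uses.

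If $k=0$, the cycle lies in the circle $C$, so it is $C$ itself, of length $5\pi/2\ge 2\pi$.

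If $k\ge 1$, delete the $k$ chords from the cycle. What remains is a disjoint union of $k$ arcs of $C$, each joining an endpoint of one chord to an endpoint of the next chord in cyclic order.
\begin{itemize}
\item For $k=1$, the single arc joins two antipodal points $a_i$ and $a_{i+M}$ of $C$, so it has length $5\pi/4$. The total length is then $2\alpha_0+5\pi/4 \ge \frac{400\pi}{401}+\frac{5\pi}{4}>2\pi$.
\item For $k\ge 2$, consecutive chords in the cycle have distinct endpoints, because distinct chords share no vertex. Hence each of the $k$ arcs contains at least one circle edge and has length at least $\delta=\frac{5\pi}{4M}$. The cycle length is therefore at least $k(2\alpha_0+\delta)$.
\end{itemize}

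Since $k(2\alpha_0+\delta)$ is increasing in $k$, the remaining case reduces to $k=2$, that is, to checking
\[
\frac{4M\pi}{2M+1}+\frac{5\pi}{2M}\ \ge\ 2\pi .
\]
This is equivalent to $\frac{5}{2M}\ge \frac{2}{2M+1}$, which clearly holds (for any $M$).

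The only point needing care, and the main potential obstacle, is the combinatorial decomposition of a cycle into full chords and circle arcs. It rests on two facts: the $b_i$ have degree $2$, and the chord endpoint pairs are disjoint. The second fact is what rules out zero-length arcs when $k\ge 2$; without it, two chords alone would give only $4\alpha_0<2\pi$. Once this decomposition is in place, the length bounds are routine arithmetic.
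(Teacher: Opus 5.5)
Your proof is correct and follows the paper's argument. Like the paper, you split cycles by how many children $b_i$ (equivalently, chords) they contain, and you get the same bounds $5\pi/2$, $2\alpha_0+5\pi/4$, and $4\alpha_0+2\delta$ for the cases of $0$, $1$, and at least $2$ chords; your use of the degree-$2$ property of the $b_i$ and the disjointness of the chord endpoint pairs $\st{a_i,a_{i+M}}$ justifies the claim of at least two circle edges, which the paper only asserts.
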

	\begin{proof}
		We argue based on the number of vertices in $V_1$ contained in each cycle. 
		A cycle in $G_0$ that contains no vertices in $V_1$ must be the 
		initial circle $C$, which has length $5\pi/2 > 2\pi$. A cycle containing
		exactly one vertex in $V_1$ must traverse two parent-child edges of length $\alpha_0$
		and half of the initial circle $C$, giving total length
		\[2\alpha_0 + \frac{5\pi}{4} = \frac{2M\pi}{2M+1} + \frac{5\pi}{4} = \frac{400\pi}{401} + \frac{5\pi}{4} > 2\pi.\]
		Finally, a cycle containing at least two vertices in $V_1$ must traverse 
		at least 4 parent-child edges of length $\alpha_0$, and at least two circle edges of length
		$\delta$, giving total length at least
		\[4\alpha_0 + 2\delta = \frac{4M\pi}{2M+1} +\frac{5\pi}{2M} = \frac{800\pi}{401} +\frac{5\pi}{400} > 2\pi.\qedhere\]
	\end{proof}

	Our next goal is to prove that the sequence of graphs described by Theorem \ref{graphsequence}
	never stops growing. That is, new children are added in each generation $n$ for all $n\geq 1$.
	In fact we will show that an exponential number of new children are added in each generation. 
	For completeness, we record a lemma for graphs of degree at most $\Delta$ that bounds the number of vertices in a ball of 
	combinatorial radius $R$. 
	
	\begin{lemma}
		\label{ballcardinality}
 		In a graph of degree at most $\Delta \geq 3$, the number of vertices in a ball of combinatorial radius at most $R$ is at most
		$\Delta(\Delta-1)^R$.
	\end{lemma}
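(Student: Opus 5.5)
We count the vertices in the ball by splitting according to their combinatorial distance from the center. Fix a vertex $v$ and, for $0\le k\le R$, let $S_k$ be the set of vertices at combinatorial distance exactly $k$ from $v$. The ball is then the disjoint union $S_0\cup\cdots\cup S_R$.

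\textbf{Size of each sphere.} We show $|S_0|=1$ and, for $k\ge 1$, $|S_k|\le \Delta(\Delta-1)^{k-1}$, by induction on $k$.

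For $k=1$, the set $S_1$ consists of neighbors of $v$, so $|S_1|\le\Delta$.

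For $k\ge 2$, every $w\in S_k$ has a neighbor in $S_{k-1}$, namely the preceding vertex on a shortest path from $v$ to $w$. So $S_k$ is covered by the neighborhoods of vertices $u\in S_{k-1}$. Each such $u$ (with $k-1\ge 1$) has at least one neighbor in $S_{k-2}$, again the predecessor on a shortest path. That neighbor is not in $S_k$, so $u$ contributes at most $\Delta-1$ vertices to $S_k$. Hence $|S_k|\le (\Delta-1)|S_{k-1}|$, and the claimed bound follows by induction.

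\textbf{Summing.} Adding the sphere bounds gives
\[
|B_R(v)|\le 1+\Delta\sum_{k=0}^{R-1}(\Delta-1)^k = 1+\frac{\Delta\big((\Delta-1)^R-1\big)}{\Delta-2}.
\]
It remains to compare this with $\Delta(\Delta-1)^R$. Write $q=\Delta-1\ge 2$, so $\Delta=q+1$ and $\Delta-2=q-1$. Multiplying the desired inequality by $q-1>0$, it becomes
\[
(q-1)+(q+1)(q^R-1)\le (q+1)(q-1)q^R.
\]
Rearranging, this is equivalent to
\[
(q+1)q^R(q-2)+2\ge 0.
\]
This holds because $q\ge 2$. (For $R=0$ the bound reads $1\le\Delta$, which is trivially true.)

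\textbf{Main obstacle.} The only delicate step is the sphere-growth estimate: one must use that each vertex in $S_{k-1}$ spends one of its at most $\Delta$ edges on the layer behind it. Everything after that is a geometric sum. The hypothesis $\Delta\ge 3$ is used only so that the division by $\Delta-2$ is valid and the final comparison holds.
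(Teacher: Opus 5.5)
Your proof is correct and is essentially the paper's argument: the paper bounds the number of vertices reachable in $k$ steps by $\Delta(\Delta-1)^{k-1}$ by counting non-backtracking steps from the center, which is your layer estimate $|S_k|\le(\Delta-1)|S_{k-1}|$ phrased via shortest-path predecessors, and it then sums the same geometric series. The one addition on your side is that you explicitly verify the final comparison $1+\Delta\sum_{k=0}^{R-1}(\Delta-1)^k\le\Delta(\Delta-1)^R$ by reducing it to $(q+1)q^R(q-2)+2\ge 0$, whereas the paper simply asserts it.
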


	\begin{proof}
		Starting from the center of the ball leaves us with at most $\Delta$ choices for the first step, 
	then at most $\Delta-1$ choices for each further non-backtracking step. Then in $R$ steps, the greatest number of vertices one can reach is
	\[1 + \Delta + \Delta(\Delta-1) + \Delta(\Delta-1)^2 + \cdots + \Delta(\Delta-1)^{R-1} \leq \Delta(\Delta-1)^R.\qedhere\] 
	\end{proof}
	
	Now we can prove our desired growth estimate.

	\begin{theorem}
		\label{exponentialvertices}
		For each integer $n\geq 1$ let $N_n = |V_n|$ be the
		 number of vertices in $G_n$ that are children of those in $V_{n-1}$.
		 Then $N_n \geq 50\cdot 2^{n-2}$ for all $n\geq 2$.
	\end{theorem}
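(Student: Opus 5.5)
The plan is induction on $n$, showing at each generation that the augmentation procedure is forced to create many children. The key observation is that when the procedure terminates at generation $n$, every pair $u,v\in V_n$ with fewer than 4 children each satisfies $d_{\text{cur}}(u,v) < 2\pi - 2\alpha_n$. So the set $U\subseteq V_n$ of \emph{unsaturated} vertices (those with fewer than 4 children) must be a ``short-distance clique''. Every edge of $G_{n}$ touching a vertex of $V_{n+1}$ or $V_n$ has length at least $\alpha_n$ by Theorem~\ref{graphsequence}(c), and the sequence is nonincreasing, so every edge of $G_n$ has length at least $\alpha_n$ (circle edges have length $\delta$, which we compare separately). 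Hence a path of length less than $2\pi$ has combinatorial length at most roughly $2\pi/\alpha_n$. By Lemma~\ref{ballcardinality} with $\Delta = 6$, the ball around a fixed $u\in U$ of metric radius $2\pi$ contains a bounded number of vertices, but this bound grows with $n$ and is useless by itself. I would therefore argue differently: either $U$ is empty, or $U$ is small, or few vertices are unsaturated relative to the number of edges produced.

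Concretely, let $S = V_n\setminus U$ be the saturated vertices, each having exactly 4 children. Counting parent-child edges gives $2N_{n+1} \ge 4|S|$, so $N_{n+1}\ge 2|S| = 2(N_n - |U|)$. It therefore suffices to bound $|U|$ by a constant small compared with $N_n$. For example, $|U|\le N_n/4$ or an absolute bound would give $N_{n+1}\ge 2N_n - 2|U|$. Combined with a base case $N_2\ge 50$, obtained by a direct computation on $G_0$ and generation 1 (the $M=200$ vertices $b_i$ sit on a structure resembling the circle of circumference $5\pi/2$, so antipodal-ish pairs are far apart), this would propagate to $N_n\ge 50\cdot 2^{n-2}$, provided the loss from $U$ is absorbed. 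A cleaner route is to prove the stronger inductive statement $N_{n+1}\ge 2N_n$, i.e.\ to show $U$ is essentially empty, or to bound the sum $\sum_{u\in U}(4-\#\text{children}(u))$ and show that $N_{n+1}$ still at least doubles after accounting for it.

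The main obstacle is bounding $U$, which requires a \emph{lower} bound on distances in $G_n$: one must show that unsaturated vertices are typically far apart, at distance at least $2\pi-2\alpha_n$. My first attempt would be to exploit the long-cycle property. Each vertex $w\in V_n$ is joined to the circle $C$ through a binary ancestry tree. Maintaining the girth $\ge 2\pi$ inductively (as in Lemma~\ref{longcycles}) means two vertices at distance $< 2\pi - 2\alpha_n$ are linked by a short path that cannot be closed into a short cycle. Together with the initial circle having circumference $5\pi/2$, this should force descendants of ``antipodal'' regions of $C$ to remain far apart, since each generation moves distances only by $O(\alpha_n)$ and $\sum\alpha_n^2<\infty$ controls the drift. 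From this, $U$ would be confined to a set of vertices all within distance $2\pi$ of one another, which the tree structure plus degree bound $6$ limits to fewer than half of $V_n$. I expect the precise distance bookkeeping here to be the delicate part, and the base case computation for $N_2$ to be routine.
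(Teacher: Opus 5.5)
\textbf{There is a genuine gap: the proposal never bounds the set $U_n$ of unsaturated vertices.} Your counting step is right. Each saturated vertex of $V_n$ has $4$ children, so the handshake count gives $N_{n+1}\ge 2(N_n-|U_n|)$. The tool you set aside as ``useless by itself'' is exactly what works, because the ball bound grows only very slowly:
\begin{itemize}
\item At termination, any $u,v\in U_n$ satisfy $d_n(u,v)<2\pi-2\alpha_n$.
\item A path between vertices of $V_n$ that uses a short circle edge contains at least $2n$ parent-child edges. Its length is therefore at least $2n\alpha_n>2\pi-2\alpha_n$ for $n\ge 2$.
\item So shortest paths between vertices of $U_n$ avoid $C$, and all their edges have length at least $\alpha_n$. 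They therefore have fewer than $(2\pi-2\alpha_n)/\alpha_n=2+2n/M$ edges.
\item Lemma~\ref{ballcardinality} with $\Delta=6$ then gives $|U_n|\le 6\cdot 5^{2+2n/M}=150q^n$, where $q=5^{2/M}\approx 1.016$.
\item Since $q<2$, the relative losses $150q^k/2^{k-2}$ are summable. Unrolling the recursion gives $N_n/2^{n-2}\ge N_2-150q^2/(1-q/2)$.
\end{itemize}
The exact threshold matters. With your rough count $2\pi/\alpha_n=4+2n/M$, the constant becomes $3750$ instead of $150$, and the induction no longer closes for $M=200$.

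Your proposed replacement, via drift controlled by $\sum\alpha_n^2$ and a tree structure that $G_n$ does not have, is only heuristic. Its stated target would not suffice anyway. A proportional bound such as $|U_n|\le N_n/2$ or $N_n/4$ yields only $N_{n+1}\ge N_n$ or $\tfrac32N_n$, which loses the rate $2^n$. Nothing forces $U_n=\emptyset$ either. What you need is not that unsaturated vertices are far apart. You need a volume bound on sets of small diameter, with error $o(2^n)$ in a summable sense, and the degree bound supplies exactly this.

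\textbf{The base case also needs slack.} Starting from $N_2\ge 50$, no loss whatsoever can be absorbed. The paper obtains $N_2\ge 2(M-4)=392$ as follows. Two distinct vertices of $V_1$ without a common child are at $G_1$-distance at least $\min\{4\alpha_1,2\alpha_0+\delta\}=2\alpha_0+\delta\ge 2\pi-2\alpha_1$. This holds for \emph{every} such pair, not just antipodal ones, since descending to $C$ and climbing back already costs $2\alpha_0\approx\pi$. So any two vertices of $U_1$ must share a child. Hence $U_1$ is a clique in $H_1$ with all degrees at most $3$, which gives $|U_1|\le 4$ and $N_2\ge 2(M-4)$. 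Bernoulli's inequality gives $150q^2/(1-q/2)\le 675/2$, and $392-675/2>50$ closes the argument.
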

	\begin{proof}
		We begin by estimating how many children are created in the first 
		augmentation from parents in $V_1$. Consider two vertices $u,v\in V_1$ 
		that do not have a common child in $V_2$. Their parent sets in $V_0$ are obviously
		disjoint, and shortest paths between $u$ and $v$ can occur in only two ways:
		either taking at least four parent-child edges between $V_1$ and $V_2$, or passing through at least two parent-child
		edges between $V_0$ and $V_1$ plus at least one circle edge. Thus the distance in $G_1$ between $u$ and $v$ is at least
		\[d_{1}(u,v) \geq \min\st{4\alpha_1,2\alpha_0+\delta} =2\alpha_0 + \delta \geq 2\pi - 2\alpha_0,\]
 		where the equality and second inequality can be verified directly from the definitions of $\alpha_0, \alpha_1, \delta$.
		This indicates that $u$ and $v$ would have been joined by a child, if they did not already have
		4 children. Said differently, the preceding argument shows any two vertices in $V_1$ with fewer than 
		4 children must have a common child. Let $U_1\seq V_1$ be the
		set of vertices having fewer than 4 children. By definition of $H_1$, we have just shown
		that $U_1$ is a clique in $H_1$. Then since a vertex in $U_1$ is joined to at most 3 children, this clique
		has size at most 4, i.e. $|U_1| \leq 4$. Then the handshaking lemma gives us
		\[N_2 = |E(H_1)| = \frac{1}{2}\sum_{v\in V_1}\deg_{H_1}(v) \geq 
		\frac{1}{2}\sum_{v\in V_1\setminus U_1}\deg_{H_1}(v) = 2(|V_1| - |U_1|) \geq 
		2(M - 4).\] 
		Now we analyze the future augmentations in generations $n\geq 2$.
		We define the sets $U_n\seq V_n$ similarly as those vertices in $V_n$ 
		having fewer than 4 children. Then for any vertices $u,v\in U_n$,
		it must hold that
		\begin{equation}
			\label{distbound}
			d_n(u,v) < 2\pi -2\alpha_n
		\end{equation}
		because otherwise they would have been eligible to have another child
		before
		terminating the construction of $G_n$. 
		We now argue that this diameter bound \eqref{distbound} on $U_n$ translates to 
		a bound on the combinatorial diameter of $U_n$.
		A path between vertices in $U_n\seq V_n$ that contains an edge of the initial 
		circle $C$ must traverse at least $2n$ parent-child edges because it starts and
		ends in $V_n$, which would give total length at least (recall Theorem \ref{graphsequence}(c))
		\[2n\alpha_n > 2\pi - 2\alpha_n > d_n(u,v) \quad (n\geq 2),\]
		where the first inequality can be verified directly from the definition of $\alpha_n$. 
		Thus such a path could not be shortest, i.e.\ all shortest paths between vertices
		in $U_n$ contain no edges of $C$. Then for any two vertices $u,v$ in $U_n$, letting 
		$P$ be a shortest path in $G_n$ connecting $u$ to $v$ via $k$ edges, we have 
		\[2 \pi - 2\alpha_n > d_n(u,v) = \sum_{e\in P}\text{length}(e) \geq \alpha_n k,\]
		where the first inequality is \eqref{distbound} and the second Theorem \ref{graphsequence}(c).
		Therefore 
		\[k  < \frac{2\pi - 2\alpha_n}{\alpha_n} = 2 + \frac{2n}{M}.\]
		Thus $U_n$ is either empty or contained in a ball of combinatorial radius $R = \left\lfloor 2+\frac{2n}{M}\right\rfloor$ 
		and we may use Lemma \ref{ballcardinality} and Theorem \ref{graphsequence}(d) to infer
		\[|U_n| \leq 6\cdot 5^{R} \leq 6\cdot 5^{2+2n/M}  = 150 q^n \text{ where } q = 5^{2/M}.\]
		Now use the handshaking lemma to lower bound $N_{n+1}$ exactly as we did $N_2$:
		\begin{equation}
			\label{approxdoubling}
			N_{n+1} = |E(H_n)| = \frac{1}{2}\sum_{v\in V_n}\deg_{H_n}(v) \geq 2(|V_n|-|U_n|) \geq 2(N_n - 150q^n).
		\end{equation}
		Intuitively, \eqref{approxdoubling} represents that the number of children $N_{n+1}$ is roughly double the number
		of parents $N_n$, up to an error that grows with geometric rate $1< q < 2$. Since the ratio $q/2$ is less than 1, 
		the relative contribution of this error
		 over the entire infinite construction is summable and thus 
		the growth rate of $N_n$ is at least a power of $2$, provided there are enough parents at the start to overpower the error terms. 
		The subsequent argument
		makes this precise.
		An easy induction starting at $n = 2$ gives the first inequality below, followed by 
		applying the bound $N_2 \geq 2(M-4)$ and replacing a sum with a convergent series: 
		\begin{align}
			\label{doublingbd}
			\frac{N_{n}}{2^{n-2}} &\geq N_2 - \sum_{k=2}^{n-1}\frac{150q^k}{2^{k-2}}\\
			&\geq 2(M-4) - 150 q^2\sum_{k=0}^{\infty}\left(\frac{q}{2}\right)^k\nonumber \\
		&= 2(M-4) -\frac{150 \cdot 5^{4/M}}{1-5^{2/M}/2} \quad (n\geq 2)\nonumber.
		\end{align}
		Clearly the right-hand side goes to $\infty$ as $M\to \infty$
		and in particular is eventually positive, and this is more than 
		sufficient to see that $N_n$ is always positive. Our choice of $M = 200$ reflects one
		explicit option which makes the right-hand side greater than 50, giving the 
		quantitative version in the theorem. To verify this, two applications of Bernoulli's inequality
		give
		\[q = 5^{1/100} = (1+4)^{1/100} \leq 1 + \frac{4}{100} = \frac{26}{25}\]
		\[q^2 = (1+4)^{1/50}\leq 1+\frac{4}{50} = \frac{27}{25}.\]
		It follows that
		\[\frac{150q^2}{1-q/2}\leq \frac{150\cdot (27/25)}{1-13/25} = \frac{675}{2},\]
		which gives
		\[2(M-4) -\frac{150q^2}{1-q/2} \geq 392 - 675/2 = \frac{109}{2} > 50.\] 
Substituting into \eqref{doublingbd} yields the result. \qedhere
	\end{proof}

	Combined with Theorem \ref{graphsequence}, we obtain a sequence of metric 
	graphs $(G_n,d_n)$ that are strictly increasing in the sense that $G_n \subsetneq G_{n+1}$ for all $n\geq 0$, with 
	different metrics satisfying $d_n\geq d_{n+1}$ when both metrics are restricted to $G_n$.
	Forming the basis of the construction is the limiting space
	\[Y = \bigcup_{n=0}^\infty G_n\]
	with pseudometric $d$ defined by
	\[d(x,y) = \lim_{n\to \infty}d_n(x,y) \text{ for any } x,y \in Y.\]
	This is well-defined because $x,y$ both belong to a common $G_N$ for some $N\in \N$, and thus belong to $G_n$ for all $n \geq N$, 
	and the limit exists because it is bounded below and nonincreasing.
	It is easily checked that $d$ is a pseudometric and with a bit more work (Proposition \ref{metricgraph})
	we will see that it is actually a metric. One simple observation is that
	each $G_n$ is a compact subset of $Y$ because $(G_n,d_n)$ is a finite metric graph
	and thus compact, while the inclusion $(G_n,d_n) \hookrightarrow (Y,d)$ is continuous since
	$d\leq d_n$.

	To better measure the geometric structure of $Y$ we define a nonnegative function $h\from Y\to \R_+$  
	that captures in some sense the ``height'' of a point in $Y$, in a continuous way, as it emerges in the 
	increasing construction of $Y$ from the graphs $G_n$. First, set $h \equiv 0$ on the initial circle $C$.
	Then, recalling the nonnegative scalars $\st{s_n}_{n=0}^\infty$ from \eqref{scalars}, define $h$ on each vertex set $V_n$, $n\geq 0$, by
	$h(v) = s_n$ whenever $v\in V_n$. Note that this is consistent with $h\equiv 0$ on $C$. 
	Then, for each $n\geq 0$, extend $h$ to all points 
	on the edges connecting $V_n$ to $V_{n+1}$ by 
	linearly interpolating the endpoint values $s_n$ and $s_{n+1}$. 
	Figure \ref{heightfunct} provides an intuitive sketch of how the height function is defined and distinguishes
	the different generations of $Y$.

	\begin{figure}[h!]
	\begin{center}
		\includegraphics[scale=0.9]{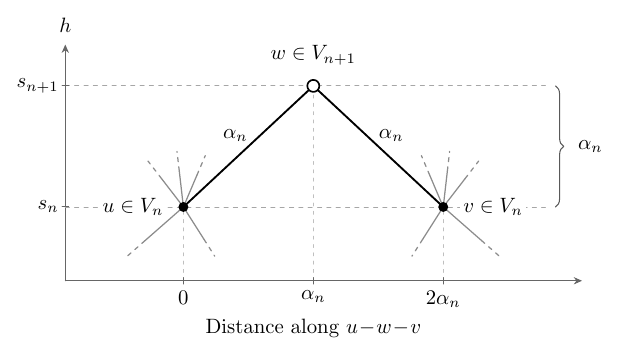}
	\end{center}
	\caption{Height interpolates between values $s_n$ and $s_{n+1}$ on parent-child edges.}
	\label{heightfunct}
\end{figure}

	\begin{lemma}
		\label{h-Lipschitz}
		The height function $h\from Y\to \R_+$ is 1-Lipschitz with respect to $d$.
	\end{lemma}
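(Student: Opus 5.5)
The plan is to prove the Lipschitz bound first on each finite graph $(G_n,d_n)$, and then pass to the limit metric $d$. Concretely, I will show that for every $n\geq 0$ and all $x,y\in G_n$,
\[|h(x)-h(y)|\leq d_n(x,y).\]
Granting this, fix $x,y\in Y$ and choose $N$ with $x,y\in G_N$. Then $x,y\in G_n$ for every $n\geq N$, and $h$ is one fixed function on $Y$; its restriction to $G_n$ does not depend on $n$, since later generations only add new edges. So $|h(x)-h(y)|\leq d_n(x,y)$ for all $n\geq N$, and letting $n\to\infty$ gives $|h(x)-h(y)|\leq d(x,y)$.

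For the finite-graph statement, I will use the fact that $d_n(x,y)$ is the infimum of lengths of piecewise linear paths in $G_n$ from $x$ to $y$. Any such path is a finite concatenation of edge segments $[p_0,p_1],[p_1,p_2],\dots,[p_{m-1},p_m]$, each lying in a single edge, with $p_0=x$ and $p_m=y$. By the triangle inequality for $|\cdot|$,
\[|h(x)-h(y)|\leq \sum_i |h(p_i)-h(p_{i+1})|.\]
It therefore suffices to check that $|h(p)-h(q)|\leq \text{length}([p,q])$ whenever $p,q$ lie on a common edge. Taking the infimum over paths then yields the claim.

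The key step is this single-edge check, done separately for the two kinds of edges.

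\emph{Circle edges.} On an edge of $C$, $h\equiv 0$, so the inequality is trivial.

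\emph{Parent-child edges.} Take an edge joining $V_k$ to $V_{k+1}$. By the construction in Theorem \ref{graphsequence}, it has length exactly $\alpha_k$ (for $k=0$ these are the chords of $G_0$; for $k\geq 1$ they are added in Step (2) with length $\alpha_k$). Along this edge $h$ interpolates linearly between $s_k$ and $s_{k+1}=s_k+\alpha_k$. Hence $h$ has slope exactly $1$ with respect to arc length on the edge, and $|h(p)-h(q)|=\text{length}([p,q])$.

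I expect no real obstacle. The only points needing care are two. First, the edge lengths must be exactly $\alpha_k$, which is why the partial sums $s_n$ were chosen as in \eqref{scalars}. Second, $h$ must be well defined as a single function independent of $n$. This holds because each point of $Y$ lies on an edge created at one definite generation, and vertex values are consistent on shared endpoints.
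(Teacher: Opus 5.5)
Your proposal is correct and follows the paper's route: on each finite graph $G_n$ you bound $|h(x)-h(y)|$ by $d_n(x,y)$ by splitting a path into edge segments and applying the triangle inequality, then you let $n\to\infty$. The differences are cosmetic: you take an infimum over piecewise linear paths where the paper fixes a shortest path in $G_n$, and you state explicitly the slope-one check on parent-child edges (length $\alpha_k = s_{k+1}-s_k$), which the paper leaves implicit in ``the piecewise linear definition of $h$.''
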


	\begin{proof}
		Consider any two points $x,y\in Y$ and choose $N \in \N$ sufficiently large such that 
		$x,y$ belong to $G_n$ for all $n\geq N$. For any $n\geq N$, $G_n$ is a finite metric graph 
		and for two points $x,x'\in G_n$ belonging to a common edge we denote by $d_{n,\text{edge}}(x,x')$ the length
		of the edge segment between $x,x'$ in $G_n$.
		Since $G_n$ is finite with $d_n$ its path metric, there is a path in 
		$G_n$ consisting of finitely many edge segments $[x_0,x_1]\cup[x_1,x_2]\cup\cdots\cup[x_k,x_{k+1}]$ 
		with $x_0 = x, x_{k+1} = y$ that satisfies
		$d_n(x,y) = \sum_{i=0}^{k}d_{n,\text{edge}}(x_i,x_{i+1})$. It follows from the triangle inequality in $\R$ that
		\[|h(x) - h(y)| \leq \sum_{i=0}^k|h(x_i)-h(x_{i+1})| 
		\leq \sum_{i=0}^kd_{n,\text{edge}}(x_i,x_{i+1})
		= d_n(x,y),
		\]
		where the second inequality relies on the piecewise linear definition of $h$ on the edges of $G_n$.
		Let $n\to \infty$ to conclude.
	\end{proof}
	It is somewhat nonobvious that the pseudometric $d$ on $Y$, defined by a pointwise limit of graph metrics, 
	coincides with the path length pseudometric on $Y$, but this is true and we will verify it. 
	With infinite graphs it can sometimes occur that this is really a pseudometric and not a metric
	but the simple structure of $Y$ precludes this undesirable behavior: each vertex in $Y$ has finitely many
	incident edges, and in particular there is no vertex whose incident edge lengths approach zero.
	This is enough to guarantee that the length pseudometric on a metric graph is
	actually a metric \cite[I.1.9(1)]{Bridson1999}. 
	The next proposition makes this informal discussion rigorous.
	
	\begin{proposition}
		\label{metricgraph}
		The space $(Y,d)$ is a metric graph and a proper length space.
	\end{proposition}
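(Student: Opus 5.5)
The plan is to show that the distances $d_n(x,y)$ \emph{stabilize}: for fixed $x,y$ they are eventually constant. The height function $h$ and the divergence $s_n\to\infty$ are the tools for this. Everything else in the proposition (metric, length space, properness, metric graph structure) then reduces to statements about a single finite metric graph $G_m$.

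First I record a containment coming from the way $h$ is defined. Every vertex of $V_k$ has height $s_k$. Every point on an edge between $V_k$ and $V_{k+1}$ has height in $[s_k,s_{k+1}]$, and such edges belong to $G_k$. Hence every point of $Y\setminus G_m$ lies on an edge joining $V_k$ to $V_{k+1}$ for some $k\geq m+1$, so it has height at least $s_{m+1}$. In other words,
\[\st{y\in Y : h(y)<s_{m+1}}\seq G_m .\]
I also use the fact, contained in the proof of Lemma \ref{h-Lipschitz}, that along any path in a graph $G_n$ the height changes by at most the length of the path.

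Now I prove stabilization. Fix $x,y\in Y$ with $x,y\in G_N$, and set $L=d_N(x,y)$. Choose $m\geq N$ with $s_{m+1}>h(x)+L$. Take any $n\geq m$ and a shortest path in $G_n$ from $x$ to $y$. Its length is $d_n(x,y)\leq L$, so every point on it has height at most $h(x)+L<s_{m+1}$. By the containment above, the whole path lies in $G_m$, which gives $d_m(x,y)\leq d_n(x,y)$. Combined with the monotonicity $d_n\leq d_m$, this yields $d_n(x,y)=d_m(x,y)$ for all $n\geq m$, and therefore $d(x,y)=d_m(x,y)$. Consequences:
\begin{itemize}
\item Since $d_m$ is a genuine metric, $d(x,y)>0$ whenever $x\neq y$, so $d$ is a metric.
\item Since $d_m(x,y)$ is realized by a path in $G_m\seq Y$, whose length computed in $Y$ is the same, $(Y,d)$ is geodesic, in particular a length space.
\item For the same reason, $d$ agrees with the path length metric of the infinite graph $Y$.
\end{itemize}
This last point, combined with the facts that every vertex has degree at most $6$ (Theorem \ref{graphsequence}(d)) and that edge lengths are bounded below on each $G_m$, identifies $(Y,d)$ as a metric graph in the sense of \cite[I.1.9]{Bridson1999}.

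For properness, fix $x$ and $r>0$. By Lemma \ref{h-Lipschitz}, the closed ball $B_r(x)$ lies in $\st{h\leq h(x)+r}$, which is contained in $G_m$ once $s_{m+1}>h(x)+r$. The set $G_m$ is compact in $(Y,d)$, because it is compact in $d_m$ and the inclusion is continuous as noted before the proposition. So $B_r(x)$ is a closed subset of a compact set, hence compact.

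The main obstacle I expect is the stabilization step, specifically the bookkeeping that points of height below $s_{m+1}$ genuinely lie in $G_m$ rather than in later generations. This depends on the indexing convention that edges from $V_n$ to $V_{n+1}$ belong to $G_n$, so I would check that convention carefully against the proof of Theorem \ref{graphsequence}.
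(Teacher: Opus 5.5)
Your argument is correct, but for the first half of the proposition it takes a different route from the paper. The paper never shows that distances stabilize. It proves $d=d'$ for the path-length pseudometric $d'$ by two soft inequalities. Any path in $G_m$ is a path in $Y$, giving $d'\le d$. Any piecewise linear path in $Y$ consists of finitely many edge segments, so it lies in some $G_m$, where its length is at least $d_m(x,y)\ge d(x,y)$; this gives $d\le d'$. The paper then gets that $d$ is a metric and a length metric by citing general facts about metric graphs: the degree bound $6$ gives a positive lower bound on incident edge lengths at each vertex, plus a lemma from Bridson--Haefliger. The height function enters only for properness, and there your argument coincides with the paper's.

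You instead use $h$ and $s_n\to\infty$ up front to prove that $d(x,y)=d_m(x,y)$ exactly for all large $m$. The reason is that shortest paths in later $G_n$ cannot climb above height $h(x)+d_N(x,y)$. This gains you more. Positivity of $d$, existence of geodesics (not merely the length property), and agreement with the path metric all reduce to facts about a single finite graph, with no appeal to external criteria. As a result, Hopf--Rinow is not needed in Corollary \ref{completegeo} for geodesicity, since completeness already follows from properness. The paper's route is shorter and does not need $h$ at that stage.

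Two small remarks. First, ``for the same reason'' only literally yields $d'\le d$. The reverse inequality needs the observation above that every piecewise linear path in $Y$ lies in some $G_k$. Second, your indexing worry is unfounded. The $V_n$--$V_{n+1}$ edges are created in generation $n$, and $V(G_n)=\bigcup_{j\le n+1}V_j$, so $\st{h<s_{m+1}}\seq G_m$ holds as you claimed.
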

	\begin{proof}
		The underlying graph structure of $Y$ is clear: the set of vertices is the union of all the vertex sets $V_n$, $n\geq 0$, and the set
		of edges is the union of all the edges joining parents in $V_{n-1}$ to children in $V_{n}$, $n\geq 1$, plus those of the initial circle $C$.
		To make $Y$ a metric graph it must be equipped with the path length pseudometric $d'$ defined by taking the infimum of lengths of piecewise linear paths 
		between two given points. Therefore we must check that $d = \lim_{m\to\infty}d_m$ agrees with this pseudometric $d'$.
		It is clear that $d' \leq d$ because if $x,y\in Y$ belong to some $G_M$ then
		$d'(x,y)\leq d_m(x,y)$ for all $m\geq M$ since every piecewise linear path in $G_m$ is also such a path in $Y$. Letting $m\to \infty$ proves $d'\leq d$.
		On the other hand, any piecewise linear path $P$ in $Y$ between two points $x,y$ must belong to some $G_m$
		since such a path, by definition, consists of 
		finitely many edge segments. Thus
		\[d(x,y)\leq d_m(x,y) \leq \text{length}(P).\]
		Taking the infimum over such paths $P$ shows $d\leq d'$ and thus $d = d'$. 
		Since every vertex in $Y$ has at most 6 incident edges (Theorem \ref{graphsequence}(d)), 
		the condition of each vertex having a positive lower bound on its incident edge lengths is satisfied 
		and thus $d$ is a metric and $Y$ is a metric graph. 
		Along with \cite[Lemma I.5.20]{Bridson1999}, we see that $(Y,d)$ is a length space.

		Finally, we show that $(Y,d)$ is proper. We must prove that an arbitrary closed metric ball $B_R(x)$ in $Y$ is compact.
		Lemma \ref{h-Lipschitz} shows that for any $z\in B_R(x)$,
		\[h(z) \leq h(x) + d(x,z) \leq h(x) + R.\]
		Thus $B_R(x)$ is contained in the closed sublevel set $K = \st{h \leq h(x)+ R}$, and this sublevel 
		set is in turn contained in some $G_n$ because the
		sequence of heights $\st{s_n}_{n=0}^\infty$ tends to infinity.
		We know that $G_n$ is compact in $Y$,
		thus so is $B_R(x)$.
	\end{proof}
 	Thanks to Proposition \ref{metricgraph}, we may apply the Hopf-Rinow theorem \cite[Proposition I.3.7, Corollary I.3.8]{Bridson1999} to conclude:
	\begin{corollary}
		\label{completegeo}
		The space $(Y,d)$ is a complete geodesic space.
	\end{corollary}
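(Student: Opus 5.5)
The plan is to obtain the corollary directly from Proposition \ref{metricgraph} together with the Hopf--Rinow theorem as stated in \cite[Proposition I.3.7, Corollary I.3.8]{Bridson1999}.

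\textbf{Completeness.} Completeness follows from properness alone. Let $\st{y_k}$ be a Cauchy sequence in $(Y,d)$. It is bounded, so it lies in some closed ball $B_R(y_1)$. By Proposition \ref{metricgraph} this ball is compact, so the sequence has a convergent subsequence. A Cauchy sequence with a convergent subsequence converges, hence $(Y,d)$ is complete.

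\textbf{Geodesics.} By Proposition \ref{metricgraph}, $(Y,d)$ is a complete and locally compact length space (proper implies locally compact). The Hopf--Rinow theorem then gives that any two points are joined by a minimizing path, i.e.\ a geodesic. If we prefer not to cite it in full, the argument in this setting is short:
\begin{enumerate}[label=(\roman*)]
\item Fix $x,y\in Y$. For each $k$, the length structure gives a path $\gamma_k$ from $x$ to $y$ with $\text{length}(\gamma_k)\leq d(x,y)+1/k$.
\item Reparametrize each $\gamma_k$ proportionally to arc length on $[0,1]$. The family is then uniformly Lipschitz.
\item All $\gamma_k$ lie in the compact ball $B_{d(x,y)+1}(x)$.
\item By Arzel\`a--Ascoli, a subsequence converges uniformly to a limit path $\gamma$.
\item Lower semicontinuity of length gives $\text{length}(\gamma)\leq d(x,y)$, so $\gamma$ is a geodesic after reparametrization.
\end{enumerate}

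\textbf{Main obstacle.} The only real point to check is that $d$ is genuinely the length metric, so that the Hopf--Rinow hypotheses apply. Proposition \ref{metricgraph} already establishes this through the identification $d=d'$. Given that, the proof reduces to a direct citation.
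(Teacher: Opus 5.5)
Your proposal is correct and follows the paper's route: the paper deduces the corollary directly from Proposition~\ref{metricgraph} (that $(Y,d)$ is a proper length space) by citing the Hopf--Rinow theorem. Your Cauchy-sequence argument for completeness and your Arzel\`a--Ascoli sketch for existence of geodesics simply unpack those same citations in the proper setting.
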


	The last key step in this section is to establish that $Y$ is CAT(1). 
	
	\begin{theorem}
		\label{cat1}
		The metric graph $(Y,d)$ is a complete CAT(1) space.
	\end{theorem}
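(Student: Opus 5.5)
The plan is to use the standard characterization of CAT(1) metric graphs (Bridson--Haefliger, Example II.1.15(4) / the Gromov link condition in dimension one): a metric graph is CAT(1) if and only if every injective loop (simple cycle) has length at least $2\pi$. By Corollary \ref{completegeo}, $(Y,d)$ is already a complete geodesic space, and by Proposition \ref{metricgraph} it is a metric graph with finite vertex degrees. So the whole proof reduces to showing that every simple cycle in $Y$ has length at least $2\pi$. Any cycle consists of finitely many edges, so it lies in some $G_n$. It therefore suffices to prove by induction on $n$ that every cycle in $G_n$ has length at least $2\pi$.

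The base case $n=0$ is Lemma \ref{longcycles}. For the inductive step, I would run a finer induction along the augmentation procedure in the proof of Theorem \ref{graphsequence}. The hypothesis is that the current graph $G_{\mathrm{cur}}$ has no cycle shorter than $2\pi$. We then add a child $w$ joined to $u,v\in V_n$ by edges of length $\alpha_n$, where $d_{\mathrm{cur}}(u,v)\geq 2\pi-2\alpha_n$.

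A simple cycle in the new graph either avoids $w$, in which case it is a cycle of the old graph and the hypothesis applies, or it passes through $w$. Since $w$ has degree $2$, a cycle through $w$ must use both edges $[u,w]$ and $[w,v]$. The remainder of the cycle is then a path from $u$ to $v$ in the old graph, of length at least $d_{\mathrm{cur}}(u,v)\geq 2\pi-2\alpha_n$. The total length is therefore at least $2\pi$. Because each $G_n$ is obtained from $G_{n-1}$ by finitely many such single-child additions, this proves the claim for all $G_n$, and hence for $Y$.

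The main obstacle is rigor in invoking the graph criterion for the infinite graph $Y$. Two points need checking.
\begin{itemize}
\item Geodesic triangles of perimeter less than $2\pi$ in $Y$ must satisfy the CAT(1) comparison. I would handle this directly. Such a triangle is a compact set, and its geodesics are finite unions of edge segments, so it lies in some $G_m$.
\item The geodesics in $Y$ need not be geodesics of $G_m$, since $d\le d_m$. To deal with this, I would reprove the criterion intrinsically: in a metric graph with all simple cycles of length at least $2\pi$, every closed ball of radius less than $\pi/2$ is a tree (an $\mathbb{R}$-tree, hence CAT(0)). Any loop in such a ball would have length less than $2\pi$. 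Then $Y$ is locally CAT(1) with injectivity radius at least $\pi$, i.e.\ geodesics of length less than $\pi$ are unique, because two distinct ones would create a cycle of length less than $2\pi$.
\end{itemize}
The Cartan--Hadamard-type theorem for CAT(1) spaces (Bridson--Haefliger II.4.? / the "locally CAT(1) and $\pi$-geodesic" criterion) then gives that $Y$ is CAT(1). Alternatively, I would cite the graph statement in Bridson--Haefliger directly, since it applies to metric graphs with finitely many edge lengths per vertex bounded below, which Proposition \ref{metricgraph} verified.
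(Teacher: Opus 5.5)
Your proposal is correct and follows essentially the paper's argument. The paper also reduces CAT(1) to local CAT(1), which is automatic from finite vertex degrees, plus the absence of cycles shorter than $2\pi$; it cites the generalized Cartan--Hadamard theorem where you cite the metric-graph criterion, and it uses augmentation condition (ii) exactly as you do, phrased as an extremal argument on the last-added vertex of maximal generation on the cycle rather than your step-by-step induction over single-child additions. One harmless slip: in your intrinsic aside, a loop inside a ball of radius $<\pi/2$ need not itself have length $<2\pi$, but you do not need that claim, since local CAT(0) already follows from the finite star neighborhoods of vertices and uniqueness of geodesics of length $<\pi$ follows from the cycle bound as you say.
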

	\begin{proof}
		Completeness was part of Corollary \ref{completegeo}.
	The generalized 
	Cartan-Hadamard theorem \cite[Theorem 9.67]{alexander2024alexandrov} asserts that it 
	suffices to show $Y$ is locally CAT(1) and every cycle in $Y$ has length at least $2\pi$
	(see also \cite{petterson2021graphcat}).
	In fact it is easy to see that $Y$ is locally CAT(0): this is evident at any point in the interior of an edge, and every vertex has degree at most 6 
	by Theorem \ref{graphsequence}(d) so a sufficiently small neighborhood of a vertex is a tree. CAT(0) spaces are also CAT(1) so $Y$ is locally CAT(1).
	
	Any cycle $C'$ in $Y$ is compact as the continuous image of the compact set $S^1$. As such, the continuous height function
	$h$ is bounded above on $C'$ and thus $C'$ must belong to some $G_n$ as we have argued before. Since $G_n$ has only finitely many edges, the cycle
	$C'$ must traverse only finitely many edges in $Y$. Let $N$ be the largest integer such
	that a vertex in $V_N$ belongs to $C'$. If $N \leq 1$ then $C'$ is contained in $G_0$
	and we already know that the length of $C'$ is at least $2\pi$ by Lemma \ref{longcycles}.
	Thus we may assume $N \geq 2$ and we must show that the length of $C'$ is at least $2\pi$.

	\textit{Claim 1: $C'$ contains both edges from each vertex in $V_N \cap C'$ to its parents.}

	To see this, consider a vertex $v$ in $V_N\cap C'$ which by definition is incident to an edge 
	$e$ of $C'$. 
	By the way the augmentation procedure works, 
	the only edges incident to $v$ that do not connect to vertices in $V_{N+1}$
	are those to its parents. But $V_N$ is the last set of vertices represented among the vertices of $C'$, so
	$C'$ cannot leave $v$ through any edges from $v$ to children in $V_{N+1}$. Since $C'$ is a cycle and contains $e$, it must 
	contain the other parent edge of $v$ to avoid being stuck in a dead end.
	
	\textit{Claim 2: The length of $C'$ is at least $2\pi$.}

	In the construction of $G_{N-1}$ from $G_{N-2}$ 
	there is some vertex $w$ in $V_N$ that was the last vertex in $V_N$ to become part of the cycle $C'$.
	By Claim 1, $C'$ contains the edges of length $\alpha_{N-1}$ from $w$ to both of its parents in $V_{N-1}$. 
	Denote the parents
	by $u,v\in V_{N-1}$. If we remove the edge segments $(u,w]$ and $[w,v)$ from $C'$ then what remains
	is a path $P$ from $u$ to $v$. 
	Now since $w$ was the last vertex in $V_N$ added to become part of $C'$ during the augmentation 
	procedure,
	and $N$ is the largest
	index of vertices in $C'$, 
	every other edge in $C'$ was already present when the child $w$ was added, and thus $P$ was a valid path from $u$ to $v$ at that time.
	By condition (ii) in 
	the augmentation procedure in the proof of Theorem \ref{graphsequence}, it must have held that every path 
	from $u$ to $v$ before the addition of
	$w$ had length at least $2\pi -2\alpha_{N-1}$. Hence
	\begin{align*}
		\text{length}(C') &= \text{length}(P) + \text{length}([u,w])+\text{length}([w,v])\\
		& \geq 2\pi - 2\alpha_{N-1} + \alpha_{N-1}+ \alpha_{N-1}\\
		&= 2\pi.\qedhere
	\end{align*}
	\end{proof}

	\subsection{The Hadamard space and the three-point set}
	\label{finiteset}
	The technical setup in the previous section will begin to rapidly pay off. But first let us recall one more standard construction in Alexandrov geometry, namely that 
	of the Euclidean cone over a metric space (we follow precisely the definition in \cite[Definition I.5.6]{Bridson1999}). Given a metric space $(Y,d)$, the \textit{Euclidean cone}
	over $Y$, denoted by $X = C_0Y$, is the metric space defined as follows. As a set, $X$ is the quotient of $[0,\infty)\times Y$ by the 
	equivalence relation $(t,y) \sim (t',y')$ if $(t = t' = 0)$ or $(t=t' > 0\text{ and } y = y')$. The equivalence class of $(t,y)$ is denoted by $[t,y]$, 
	while the equivalence class of $(0,y)$ is denoted by $0$ and called the \textit{tip} of the cone.
	The metric $d_0$ on $X$ is defined by
	\[d_0^2([t,y],[s,z]) =  t^2 + s^2-2ts\cos\left(\min\st{\pi,d(y,z)}\right).\]
	It is an immediate consequence of how the cone metric is defined that for a geodesic in $Y$ of length $L \leq \pi$, say $\gamma\from [0,L]\to Y$,
	the subcone $C_0\gamma \seq C_0Y$ is isometric to a Euclidean sector of angle equal to the
	length $L$, and thus convex (see also \cite[Proof of Proposition I.5.10(1)]{Bridson1999}). Specifically, the isometry
	sends a point in the sector described by polar coordinates $(r,\theta) \in [0,\infty)\times [0,L]$ to 
	$[r,\gamma(\theta)]\in X$.
	Using this isometry, it is easily shown by elementary geometry that the midpoint of $[r,\gamma(0)]$ and $[r,\gamma(L)]$ is exactly the point
	$[r\cos(L/2),\gamma(L/2)]$ as illustrated by Figure \ref{sectorpic}.

	\begin{figure}[h!]
		\centering
		\includegraphics[scale=1]{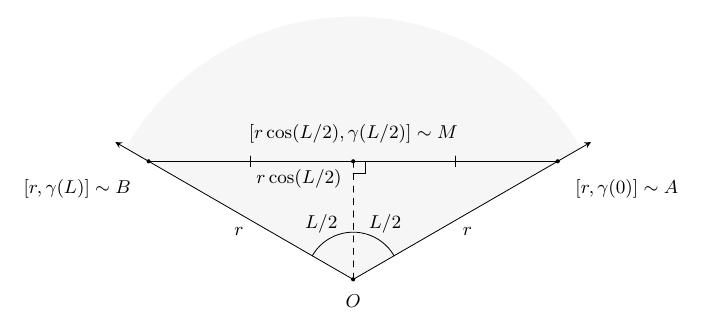}
		\caption{The cone over a geodesic $\gamma$ of length $L \leq \pi$ is isometric to a convex
		Euclidean sector. The Euclidean points $A,B,M$ in the sector are labeled according to the corresponding cone point that 
		they are mapped to under the natural isometry.}
		\label{sectorpic}
	\end{figure}

	By Theorem \ref{cat1}, the space $(Y,d)$ is a complete CAT(1) space, so Berestovskii's theorem 
	\cite[Theorem II.3.14, Proposition I.5.9]{Bridson1999} guarantees that the Euclidean cone $X := C_0Y$ is a complete CAT(0) space, 
	i.e.\ a Hadamard space.
	The initial three-point set will correspond to 
	three equally spaced points along the initial circle $C$ in $G_0$, and a short preliminary argument will be used to show
	that the convex hull of this set is large enough to generate points 
	corresponding to all of the children added during the construction of
	$Y$. 

	Let $p \from [0,5\pi/2]\to Y$ be a unit-speed parametrization of the 
	cycle $C$ in $G_0$. Consider the three-point set 
	\[A = \st{[1,p(0)], [1,p(5\pi/6)], [1,p(5\pi/3)]} \seq X.\] 
	Define a positive sequence $\st{r_n}_{n=0}^\infty$, thought of as radii, by
	\begin{equation}
		\label{radius}
		r_0 = \cos(5\pi/12) > 0, \quad \quad r_{n+1} = r_n\cos\alpha_n.
	\end{equation}

	\begin{lemma}
		\label{lowerbd}
		The sequence of radii $\st{r_n}_{n=0}^\infty$ admits a positive lower bound:
		\[r_n \geq r_* := r_0 \prod_{j=0}^\infty \cos(\alpha_j) > 0 ~ ~ \text{for all } n\geq 0.\]
	\end{lemma}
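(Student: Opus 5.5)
Unwinding the recursion \eqref{radius} gives the closed form $r_n = r_0\prod_{j=0}^{n-1}\cos(\alpha_j)$ for every $n \geq 0$, with the empty product equal to $1$ when $n=0$. Since $0<\alpha_j<\pi/2$, each factor satisfies $0<\cos\alpha_j< 1$. The partial products are therefore positive and nonincreasing in $n$, so they converge to their infimum $\prod_{j=0}^\infty\cos\alpha_j \ge 0$, and $r_n \geq r_*$ holds for all $n$. Also $r_0=\cos(5\pi/12)>0$ because $5\pi/12<\pi/2$. The only substantive point is that the infinite product is strictly positive and not zero.

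To establish positivity, pass to logarithms: it suffices to show that $\sum_{j}-\log\cos\alpha_j<\infty$. I would use an elementary bound valid on $[0,\pi/2)$ or on a smaller interval containing all the $\alpha_j$. Since $\alpha_j\le \alpha_0 = \frac{200\pi}{401}<\pi/2$, the values $\cos\alpha_j$ are bounded below by $c:=\cos\alpha_0>0$. We have $1-\cos x\le x^2/2$. Moreover, $-\log y\le \frac{1-y}{y}$ for $y\in(0,1]$, which follows from $\log(1/y)\le 1/y-1$. Combining these gives
\[
-\log\cos\alpha_j \le \frac{1-\cos\alpha_j}{\cos\alpha_j}\le \frac{\alpha_j^2}{2c}.
\]
Summing and using $\sum_j\alpha_j^2<\infty$, which holds explicitly since $\alpha_n = \frac{M\pi}{2M+n}$ is $O(1/n)$, shows that $\sum_j -\log\cos\alpha_j$ converges. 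Hence $\prod_j\cos\alpha_j = \exp\bigl(-\sum_j -\log\cos\alpha_j\bigr)>0$.

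The main obstacle is the constant $c$. It is small, because $\alpha_0$ is very close to $\pi/2$. The bound is nonetheless uniform, since the sequence is nonincreasing and so only $\alpha_0$ matters. Alternatively, one can discard finitely many initial factors, each of which is positive, and apply the bound only for $j$ large, where $\alpha_j\le \pi/3$ and $c\ge 1/2$. Either way, positivity of $r_*$ follows, completing the proof.
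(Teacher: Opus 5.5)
Your proof is correct and follows essentially the same route as the paper: pass to logarithms, bound $-\log\cos\alpha_j$ by a constant multiple of $\alpha_j^2$, and use $\sum_j\alpha_j^2<\infty$. The only difference is in the elementary inequality. You use the uniform bound $-\log\cos t\le t^2/(2\cos\alpha_0)$ on $[0,\alpha_0]$, whereas the paper uses $\log\cos t\ge -t^2$ on $[0,1/2]$ after discarding the finitely many indices with $\alpha_j>1/2$, which is your stated alternative.
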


	\begin{proof}
		Clearly $r_n \geq r_*$. With calculus one can show $\log(\cos(t)) \geq -t^2$ on the interval $[0,1/2]$. Choosing
		$N$ sufficiently large such that $\alpha_N \leq 1/2$, for any $n\geq N$ we find
		\begin{align*}
			\log\left(r_0\prod_{j=0}^n\cos(\alpha_j)\right)
			&= \log(r_0)+\sum_{j=0}^{N-1}\log(\cos(\alpha_j)) + \sum_{j=N}^n\log(\cos(\alpha_j))\\
			&\geq \log(r_0)+\sum_{j=0}^{N-1}\log(\cos(\alpha_j)) -\sum_{j=N}^n \alpha_j^2.
		\end{align*}
		Letting $n\to \infty$ shows $\log(r_*) > -\infty$ because $\sum_{j=N}^\infty \alpha_j^2 < \infty$.
	\end{proof}

	We will show that the convex hull of $A$ contains the following sets of points:
	\begin{equation}
		\label{sepset}
		Q_n := \st{[r_n,v] : v\in V_n} ~ ~ (n\geq 0).
	\end{equation}
	The sets $Q_n$ are constructed in accordance with a few desired properties. First, the angles $\st{\alpha_n}_{n=0}^\infty$ were chosen to be
	\begin{enumerate}[label=(\arabic*)]
		\item square-summable, so that the radius $r_n$ assigned to points in $Q_n$ maintains a minimum distance from the tip of the cone (Lemma \ref{lowerbd}), but 
		\item not summable, so that the heights $\st{s_n}_{n=0}^\infty$ tend to infinity. In turn, these heights in $Y$ translate to 
		angles in $X$, and points in $Q_{n}$ gradually become uniformly separated (in the cone metric) from points in $Q_{n'}$ by making $n \gg n'$. 
	\end{enumerate}
	This is reminiscent of a strategy used by Monod in constructing a bounded Hadamard 
	space with no extreme point \cite{Monod2016}.
	For us, these properties are essential in proving the following lemma:
	\begin{lemma}
		\label{separation} 
		The set of points $Q := \bigcup_{n=0}^\infty Q_n$ is not totally bounded.
	\end{lemma}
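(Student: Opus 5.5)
We will exhibit an infinite subsequence of points in $Q$ that are pairwise separated by a fixed positive distance; this shows $Q$ is not totally bounded. The idea is to combine the radial lower bound from Lemma \ref{lowerbd} with the fact that heights in $Y$ control angular distance, via the Lipschitz property of Lemma \ref{h-Lipschitz}.

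Choose indices $n_0 < n_1 < n_2 < \cdots$ inductively so that $s_{n_{k+1}} - s_{n_k} \geq \pi$ for all $k$. This is possible because $s_n \to \infty$. Since $N_n \geq 1$ for every $n$ by Theorem \ref{exponentialvertices} (and $V_0, V_1$ are nonempty by construction), we may pick a vertex $v_k \in V_{n_k}$ and set $x_k = [r_{n_k}, v_k] \in Q_{n_k} \seq Q$.

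For $j < k$, Lemma \ref{h-Lipschitz} gives
\[d(v_j,v_k) \geq |h(v_k)-h(v_j)| = s_{n_k}-s_{n_j} \geq \pi.\]
Hence $\min\st{\pi, d(v_j,v_k)} = \pi$, and the cone metric yields
\[d_0^2(x_j,x_k) = r_{n_j}^2 + r_{n_k}^2 + 2r_{n_j}r_{n_k} = (r_{n_j}+r_{n_k})^2 \geq 4r_*^2,\]
using Lemma \ref{lowerbd}. So $d_0(x_j,x_k)\geq 2r_* > 0$ for all $j\neq k$. A ball of radius $r_*/2$ therefore contains at most one $x_k$, so no finite collection of such balls covers $Q$, and $Q$ is not totally bounded.

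None of the steps is a serious obstacle once the earlier results are in hand. The key point is that $h$ is $1$-Lipschitz with respect to the limit metric $d$ rather than just the $d_n$, which Lemma \ref{h-Lipschitz} already provides. The other substantive point, that $r_*>0$, is exactly the content of Lemma \ref{lowerbd}.
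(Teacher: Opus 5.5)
Your proposal is correct and follows essentially the same argument as the paper. Both choose a subsequence with height gaps at least $\pi$, use the 1-Lipschitz height function to force $d(v_j,v_k)\geq \pi$, and then combine the cone metric with Lemma \ref{lowerbd} to get pairwise separation at least $2r_*$. Your explicit note that $V_0$ and $V_1$ are nonempty by construction also settles a small point the paper leaves implicit, since Theorem \ref{exponentialvertices} only covers $n\geq 2$.
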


	\begin{proof}
		The sequence $\st{s_n}_{n=0}^\infty$ tends to infinity so we can find a subsequence
		$\st{s_{n_j}}_{j=0}^\infty$ such that
		\[s_{n_{j+1}} - s_{n_j} \geq \pi \quad (j \geq 0).\]
		Theorem \ref{exponentialvertices} implies that $V_{n_j}$ is nonempty for all $j \geq 0$ so for each such $j$ choose a vertex
		$v_j \in V_{n_j}$. This gives rise to a sequence $x_j = [r_{n_j},v_j] \in Q$, $j\geq 0$.
		For any indices $i\neq j$,  Lemma \ref{h-Lipschitz} and the definition of the height function $h$ imply
		\[d(v_i,v_j) \geq |h(v_i) - h(v_j)| = |s_{n_i} - s_{n_j}| \geq \pi.\]
		Therefore the distance between $x_i,x_j$ ($i\neq j$) in $X$ is 
		\[d_0(x_i,x_j) = r_{n_i} + r_{n_j} \geq 2r_* > 0,\]
		where we used Lemma \ref{lowerbd}. We see that all points in the sequence $\st{x_j}_{j=0}^\infty$ are distance at least $2r_* > 0$ apart
		and thus cannot be contained in finitely many balls of radius $r_*/2 > 0$. Thus $Q$ is not totally bounded.
	\end{proof}

	The following lemma justifies the choice of $A$ corresponding to three equally spaced points along $C$.
	It demonstrates that these three points can be interpolated by geodesics to
	generate a substantial initial subset of $\conv(A)$.

\begin{lemma}
		\label{generatorlemma}
		For every point $y \in C$, 
		there exists a number $r\geq r_0$ such that 
		the point $[r,y]$ in $X$ lies in $\conv(A)$.
	\end{lemma}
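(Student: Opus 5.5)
The plan is to use the three geodesic segments joining consecutive points of $A$. Each one lies in $\conv(A)$, and, computed inside a Euclidean sector, each sweeps out exactly one third of the circle $C$ at radius at least $\cos(5\pi/12)=r_0$. Since the three arcs of $C$ between consecutive points of $A$ cover $C$, this proves the lemma.

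The first step, and the only one with real content, is to show that for each of the three arcs of $C$ of length $5\pi/6$ (say $\gamma=p|_{[0,5\pi/6]}$, and likewise $p|_{[5\pi/6,5\pi/3]}$ and $p|_{[5\pi/3,5\pi/2]}$ joined with $p(0)$), the arc is a geodesic of $(Y,d)$. Shortcuts through children could a priori shorten distances along $C$, so this has to be argued. I would use Theorem \ref{cat1}: $Y$ is CAT(1), and in a CAT(1) space every local geodesic of length $<\pi$ is a geodesic (see \cite{Bridson1999}, Chapter II.1).

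It then suffices to check that $\gamma$ is a local geodesic. In the interior of an edge this is clear. At a vertex $a_i$ of $C$, a small neighborhood is a tree (finite degree), and $\gamma$ passes through $a_i$ along two distinct edges without backtracking, so it is locally length-minimizing. Since $5\pi/6<\pi$, $\gamma$ is a geodesic. As a sanity check, a direct estimate is also available. Any path through a chord $a_i b_i a_{i+M}$ has length at least $2\alpha_0+(5\pi/4-5\pi/6)>5\pi/6$, because the chord joins points at arc distance $5\pi/4$. Paths through higher generations only get longer, by Lemma \ref{h-Lipschitz}. I would present the CAT(1) argument, as it is cleaner.

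Next, since $\gamma$ is a geodesic of length $L=5\pi/6\le\pi$, the subcone $C_0\gamma$ is isometric to a convex Euclidean sector of angle $5\pi/6$. The points $[1,p(0)]$ and $[1,p(5\pi/6)]$ correspond to the points at polar coordinates $(1,0)$ and $(1,5\pi/6)$. Their geodesic in $X$ is the Euclidean chord, which is unique because $X$ is Hadamard. This chord lies in $\conv(A)$. In polar coordinates the chord is $\{(r(\theta),\theta):\theta\in[0,5\pi/6]\}$ with
\[r(\theta)=\frac{\cos(5\pi/12)}{\cos(\theta-5\pi/12)}\ \ge\ \cos(5\pi/12)=r_0,\]
by elementary trigonometry: the chord is at distance $\cos(5\pi/12)$ from the apex, and its closest point is at angle $5\pi/12$. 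Hence $[r(\theta),p(\theta)]\in\conv(A)$ for every $\theta\in[0,5\pi/6]$.

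Applying the same argument to the other two arcs shows that every $y=p(\theta)\in C$, $\theta\in[0,5\pi/2]$, admits some $r\ge r_0$ with $[r,y]\in\conv(A)$. The main obstacle is the geodesicity of the arcs in $Y$. Everything else is planar trigonometry inside the sector isometry already recorded before Figure \ref{sectorpic}.
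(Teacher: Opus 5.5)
Your proof is correct and follows essentially the same route as the paper. The three arcs of $C$ of length $5\pi/6<\pi$ are local geodesics and hence geodesics in the CAT(1) space $Y$; their subcones are Euclidean sectors of angle $5\pi/6$; and the chord joining the images of consecutive points of $A$ stays at distance at least $\cos(5\pi/12)=r_0$ from the apex. Your polar formula $r(\theta)=\cos(5\pi/12)/\cos(\theta-5\pi/12)$ simply makes explicit the paper's observation that the radial segment toward $[1,y]$ meets the chord at some radius $r\ge r_0$.
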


	\begin{proof}
	First observe that the points $p(i5\pi/6), i = 0,1,2$, partition $C$
	into three local geodesics of equal length $5\pi/6 < \pi$, so 
	these paths are geodesics in the CAT(1) space $Y$ (see \cite[Proposition II.1.4(2)]{Bridson1999}).
	Choose a point $y\in C$
	and one of these three geodesics which contains $y$, call it $\gamma\from[0,5\pi/6]\to Y$
	and fix $t\in [0,5\pi/6]$ such that $\gamma(t) = y$. 
	Note that the subcone 
	$C_0\gamma \seq C_0Y = X$ is isometric to a Euclidean sector $S$ of angle
	$5\pi/6$. 

	Let $a$ and $b$ in $S$ be points that map isometrically to 
	$[1,\gamma(0)]$ and $[1,\gamma(5\pi/6)]$, respectively, in $X$.
	The line segment $L$
	joining $a$ and $b$
	is distance $r_0 = \cos(5\pi/12)$ from the origin 
	(see Figures \ref{sectorpic} and \ref{sectordiag}). 
	In the sector $S$, consider the line segment from the origin to the point
	corresponding to $[1,y]$. This line segment intersects $L$ at a radius
	$r \geq r_0$ and the point of intersection is mapped isometrically to 
	$[r,y]$ in $X$ (see Figure \ref{sectordiag}). Moreover, this point of intersection is a convex combination of
	$a$ and $b$ and by isometry is a convex combination of $[1,\gamma(0)]$ and $[1,\gamma(5\pi/6)]$,
	i.e.\ belongs to $\conv(A)$. The point $y\in C$ was arbitrary, so we are done. 
	\begin{figure}[h!]
		\centering
		\begin{minipage}[t]{0.57\textwidth}
			\centering
			\vspace{0pt}
			\includegraphics[width=\linewidth]{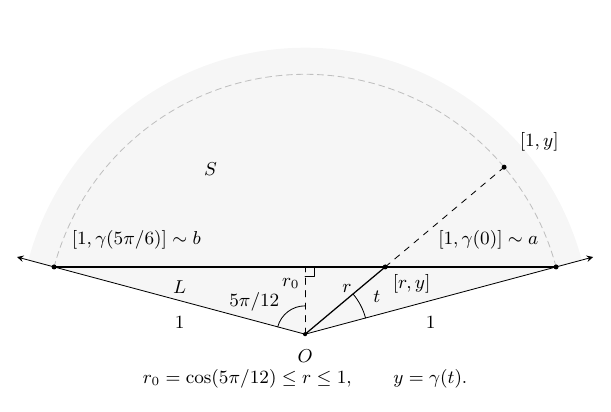}
		\end{minipage}
		\begin{minipage}[t]{0.43\textwidth}
			\centering
			\vspace{0pt}
			\includegraphics[width=\linewidth]{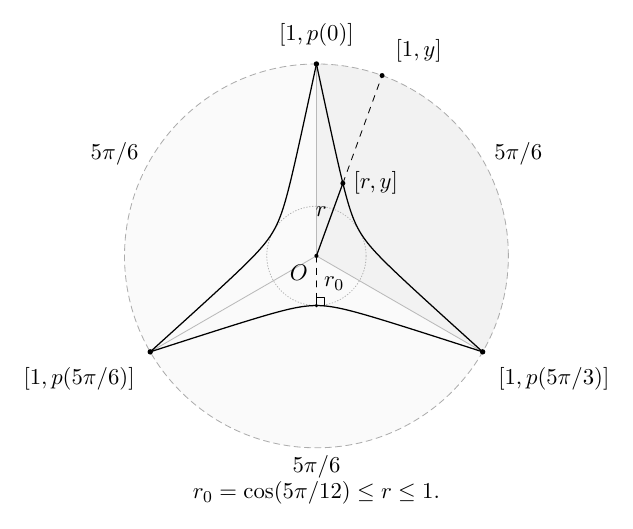}
		\end{minipage}
		\caption{Left: The point $[r,y]$ viewed as a convex combination of $a$ and $b$ in $S$.
		Right: The geodesics between the three points in $A$ generate a triangle in $\conv(A)$ surrounding the cone tip.
		The curvature in the lines connecting the three points in $A$ is an artifact of 
		rescaling a circle of circumference $5\pi/2$ to $2\pi$ by scaling all angles by a factor of $4/5$.}
		\label{sectordiag}
	\end{figure}
	\end{proof}

	Next we show that the tip of the cone $X$ also belongs to $\conv (A)$. In conjunction with
	the previous lemma, this will be used to establish that the set $Q_0$ belongs to
	$\conv(A)$ by interpolating between the tip and the points $[r,y]$ with $r\geq r_0$ and $y\in C$.
	\begin{lemma}
		\label{conetip}
		The tip of the cone $X = C_0Y$ belongs to $\conv(A)$.
	\end{lemma}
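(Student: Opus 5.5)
Since $C$ has circumference $5\pi/2 > 2\pi$, the tip should lie on a geodesic between two points of $\conv(A)$ whose directions in $Y$ are at distance at least $\pi$. Recall the cone metric: if $d(y,z)\ge\pi$, then
\[d_0([t,y],[s,z]) = t+s,\]
so the geodesic from $[t,y]$ to $[s,z]$ runs radially from $[t,y]$ into the tip and back out to $[s,z]$. This is the concatenation of the two radial segments, and it passes through $0$. So I will produce $y,z\in C$ with $d(y,z)\ge\pi$ and radii $t,s>0$ such that $[t,y],[s,z]\in\conv(A)$. Convexity of $\conv(A)$ then puts $0$ in $\conv(A)$.

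To get these points I use Lemma \ref{generatorlemma}. It gives, for every $y\in C$, some $r\ge r_0>0$ with $[r,y]\in\conv(A)$. So it suffices to find two points $y,z\in C$ with $d(y,z)\ge\pi$ in $Y$. Take $y = p(0)$ and $z = p(5\pi/4)$, antipodal points on the circle $C$.

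The main step is the lower bound $d(y,z)\ge\pi$. This is measured in the final metric $d$ on $Y$, where shortcuts through higher generations could in principle shorten it. I would argue this as follows. By Proposition \ref{metricgraph} and Corollary \ref{completegeo}, $d(y,z)$ is realized by a path, and we may take it to be a simple arc in $Y$. Along the circle, either arc has length $5\pi/4\ge\pi$. Any other arc from $y$ to $z$, concatenated with one of the two arcs of $C$ joining $z$ back to $y$, contains a cycle. The argument is cleanest by contradiction. Suppose the arc $P$ has length less than $\pi$. Let $y'$ be the last point of $P$ on the circle arc $C_1$ and $z'$ the next point of $P$ on $C$. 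If $z'$ lies on the same arc $C_1$, we get a cycle made of a subarc of $P$ and a subarc of $C_1$, of total length less than $\pi + 5\pi/4$. That is not immediately a contradiction, so I refine: I use Theorem \ref{cat1} via the standard fact that in a CAT(1) space, local geodesics of length at most $\pi$ are geodesics (\cite[Proposition II.1.4(2)]{Bridson1999}, already used in Lemma \ref{generatorlemma}). The circle arc from $y$ to $z$ of length $5\pi/4$ contains a subarc of length exactly $\pi$ starting at $y$, ending at some $z''$, and this subarc is a local geodesic because $C$ is locally geodesic: at each vertex of $C$ the path turns between two distinct edges. By that proposition this subarc is a geodesic, so $d(y,z'')=\pi$.

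It is therefore cleaner to choose $z = p(\pi)$ instead, with $d(y,z)=\pi$ exactly, and avoid the global estimate altogether. This still needs $z\in C$, which holds, and Lemma \ref{generatorlemma} applies to any point of $C$. The step requiring care is precisely the local geodesic check on $C$ at the vertices $a_i$. A path that passes through a vertex via two distinct edges is locally geodesic in a metric graph, since small neighborhoods of vertices are trees by the proof of Theorem \ref{cat1}. With $d(y,z)=\pi$ in hand, the cone formula with $\min\{\pi,d\}=\pi$ gives $d_0([t,y],[s,z]) = t+s$, the geodesic passes through the tip, and the lemma follows.
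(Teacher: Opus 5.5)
Your final argument is correct and is essentially the paper's proof. You take $y=p(0)$ and $z=p(\pi)$, obtain $d(y,z)=\pi$, and use the cone formula to see that the geodesic between the corresponding points of $\conv(A)$ is the concatenation of two radial segments through the tip. The paper instead uses $[1,p(0)]\in A$ directly rather than invoking Lemma~\ref{generatorlemma} at $y$, and it simply asserts $d(p(0),p(\pi))=\pi$; your local-geodesic argument via Theorem~\ref{cat1} justifies that step.

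Note that the discarded choice $z=p(5\pi/4)$ was not just harder but can fail outright. If $p(0)=a_i$, then $p(5\pi/4)=a_{i+M}$, and the chord through $b_i$ gives $d(a_i,a_{i+M})\le 2\alpha_0<\pi$.
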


	\begin{proof}
		By Lemma \ref{generatorlemma} there exists $r \geq r_0$ such that
		$[r,p(\pi)]$ belongs to $\conv(A)$. Since $p(0)$ and $p(\pi)$ are 
		distance $\pi$ apart, the distance between
		$[1,p(0)]$ and $[r,p(\pi)]$ is 
		\[d_0([1,p(0)],[r,p(\pi)]) = \sqrt{1 + r^2 - 
		2r\cos(\min\st{\pi,d(p(0),p(\pi))})} = 1+r.\]
		The path obtained by concatenating
		the segments from $[1,p(0)]$ to $0$ to $[r,p(\pi)]$ also has length $1+r$, so it is the unique geodesic
		between its endpoints.
		Thus $0$ lies on the geodesic between those endpoints, meaning it belongs to
		the convex hull of $[1,p(0)]$ and $[r,p(\pi)]$. Since these points are
		already in $\conv(A)$, we conclude that the tip $0$ is in $\conv(A)$.
	\end{proof}

	\begin{proposition}
		\label{conetip2}
		For each point $y\in C$, the point $[r_0,y]$ belongs to 
		$\conv(A)$. In particular, since $V_0 \seq C$, we see that $Q_0 \seq \conv(A)$.
	\end{proposition}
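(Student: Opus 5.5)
Fix $y\in C$. By Lemma \ref{generatorlemma} there is some $r\geq r_0$ with $[r,y]\in\conv(A)$, and by Lemma \ref{conetip} the tip $0$ also lies in $\conv(A)$. Since $\conv(A)$ is geodesically convex, it contains the geodesic from $0$ to $[r,y]$. The plan is to identify this geodesic explicitly as the radial ray segment $\st{[t,y] : 0\leq t\leq r}$, which then contains $[r_0,y]$ because $0<r_0\leq r$.

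The identification is a direct computation with the cone metric. For $0\leq t\leq t'$ we have $d(y,y)=0$, so
\[d_0^2([t,y],[t',y]) = t^2+t'^2-2tt' = (t'-t)^2,\]
and likewise $d_0(0,[t,y]) = t$. Thus the map $t\mapsto [t,y]$ on $[0,r]$ is an isometric embedding of $[0,r]$, i.e.\ a geodesic from $0$ to $[r,y]$. Since $X$ is a Hadamard space (Berestovskii's theorem, as recorded before Lemma \ref{generatorlemma}), geodesics are unique, so this ray segment is \emph{the} geodesic joining $0$ and $[r,y]$. Hence it lies in $\conv(A)$, and in particular $[r_0,y]\in\conv(A)$.

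For the second assertion, recall that $Q_0=\st{[r_0,v]: v\in V_0}$ and $V_0\seq C$ by construction of $G_0$. Applying the first part to each $v\in V_0$ gives $Q_0\seq\conv(A)$.

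None of the steps poses a real obstacle once Lemmas \ref{generatorlemma} and \ref{conetip} are available. The only point requiring care is invoking uniqueness of geodesics in $X$. Without it, we would only know that \emph{some} geodesic from $0$ to $[r,y]$ lies in $\conv(A)$, and convexity is defined with respect to that unique geodesic.
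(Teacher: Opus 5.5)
Your proposal is correct and follows the paper's proof essentially verbatim: take $r\geq r_0$ with $[r,y]\in\conv(A)$ from Lemma \ref{generatorlemma}, use the tip $0\in\conv(A)$ from Lemma \ref{conetip}, and observe that $[r_0,y]$ lies on the geodesic between them. The only difference is that you verify explicitly from the cone metric that the radial segment $\st{[t,y]:0\le t\le r}$ is that geodesic, whereas the paper simply points to Figure \ref{sectordiag} for this step.
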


	\begin{proof}
		For any point $y\in C$, use Lemma \ref{generatorlemma}
		to choose
		 $r\geq r_0$ such that $[r,y]$ belongs to $\conv(A)$. 
		The cone tip $0$ also belongs to $\conv(A)$ by Lemma
		\ref{conetip}, and the point $[r_0,y]$ lies on the geodesic
		from $[r,y]$ to $0$ (see Figure \ref{sectordiag}), so the result follows.
	\end{proof}

	Finally, we can put all of the pieces together by showing that the set $\conv(A)$ contains the set $Q$. The proof is essentially an 
	exercise in using the geometry of the cone metric to see how the
	midpoints created in the construction of $Y$ become angular midpoints in the Euclidean cone $X$.

	\begin{theorem}
		\label{counterexample}
		In the Hadamard space $X = C_0Y$, the convex hull of the three-point set $A$ is not totally bounded 
		and thus $\overline{\conv}(A)$ is not compact.
	\end{theorem}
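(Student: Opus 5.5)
We will show by induction on $n$ that $Q_n \seq \conv(A)$ for every $n\geq 0$. Then $Q\seq\conv(A)$. A subset of a totally bounded set is totally bounded, and $Q$ is not totally bounded by Lemma \ref{separation}, so $\conv(A)$ is not totally bounded. Hence its closure $\clos{\conv}(A)$ is not totally bounded either, and so it is not compact. The base case $Q_0\seq\conv(A)$ is Proposition \ref{conetip2}.

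For the inductive step, suppose $Q_n\seq\conv(A)$ and let $w\in V_{n+1}$ be a child of parents $u,v\in V_n$. These parents are joined to $w$ by edges of length $\alpha_n$ in the construction of $G_n$. The key claim is that $d(u,v)=2\alpha_n$ in $Y$, so that the concatenation of the two parent–child edges is a geodesic $\gamma\from[0,2\alpha_n]\to Y$ with midpoint $w$.

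\begin{itemize}
\item The upper bound $d(u,v)\le 2\alpha_n$ is clear.
\item For the lower bound, note that $Y$ is CAT(1) by Theorem \ref{cat1}, and $2\alpha_n<\pi$. Any local geodesic of length less than $\pi$ in a CAT(1) space is a geodesic \cite[Proposition II.1.4(2)]{Bridson1999}. The path $u\to w\to v$ is a local geodesic in the metric graph $Y$: it is a path of two distinct edges meeting at $w$ with no backtracking, which is locally length-minimizing at $w$ because a small neighborhood of a vertex is a tree.
\end{itemize}

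Alternatively, one could argue directly: any other path from $u$ to $v$ combined with $[u,w]\cup[w,v]$ forms a cycle of length at least $2\pi$ by Theorem \ref{cat1}'s proof, forcing its length to be at least $2\pi-2\alpha_n>2\alpha_n$. I expect this identification of $d(u,v)=2\alpha_n$ to be the main point requiring care. I would use the local geodesic argument, since Theorem \ref{cat1} is already available.

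Given this, the subcone $C_0\gamma$ is isometric to a Euclidean sector of angle $2\alpha_n\le\pi$. By the midpoint computation illustrated in Figure \ref{sectorpic}, the geodesic midpoint of $[r_n,u]$ and $[r_n,v]$ is $[r_n\cos\alpha_n,\gamma(\alpha_n)]=[r_{n+1},w]$. Both endpoints lie in $Q_n\seq\conv(A)$ by the inductive hypothesis. By convexity of $\conv(A)$ and uniqueness of geodesics in the Hadamard space $X$, we get $[r_{n+1},w]\in\conv(A)$. Since every $w\in V_{n+1}$ arises this way (Theorem \ref{graphsequence}(b)), $Q_{n+1}\seq\conv(A)$, completing the induction and the proof.
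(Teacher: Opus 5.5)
Your proposal is correct and follows the paper's proof essentially step for step. Both arguments use induction on $n$ with base case Proposition~\ref{conetip2}, observe that the path $u\to w\to v$ is a local geodesic of length $2\alpha_n\le\pi$ in the CAT(1) space $Y$ and hence a geodesic, compute the Euclidean-sector midpoint $[r_n\cos\alpha_n,w]=[r_{n+1},w]$, and conclude with Lemma~\ref{separation}; your extra remarks (why the path is locally geodesic at $w$, and why non-total-boundedness passes to the closure) only make explicit details the paper leaves implicit.
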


	\begin{proof}
		By Lemma \ref{separation} it suffices to show that $\conv(A)$ contains the set $Q_n$ for each $n\geq 0$.
		Proposition \ref{conetip2} proves the base case $Q_0\seq \conv(A)$. Proceeding inductively, assume 
		$Q_n \seq \conv(A)$ and choose an arbitrary vertex $w \in V_{n+1}$. To complete the induction we must show 
		$[r_{n+1},w]$ belongs to $\conv(A)$. By definition of the vertex set $V_{n+1}$ we know that 
		$w$ is the child of two parents 
		$u,v\in V_n$ to which it is joined by edges of length $\alpha_n$. 
		The induction hypothesis ensures $[r_n,u], [r_n,v]$ belong to $\conv (A)$ so we
		need only verify that $[r_{n+1},w]$ is their geodesic midpoint in $X$. 

		Note that the path $P$ from $u$ to $v$ via $w$ is a local geodesic of length
		$2\alpha_n \leq \pi$ in the CAT(1) space $Y$, so it is in fact a geodesic.
		Then the subcone $C_0P \seq C_0Y$ is isometric to a Euclidean sector of
		angle $2\alpha_n$, and the midpoint of the corresponding
		geodesic endpoints $[r_n,u]$, $[r_n,v]$ is exactly $[r_n\cos(d(u,v)/2),w]$
		(see Figure \ref{sectorpic} and the discussion preceding it). 
		Given that $d(u,v) = 2\alpha_n$ we see that this midpoint is precisely 
		$[r_n\cos\alpha_n,w] = [r_{n+1},w]$ by definition of $r_{n+1}$.
		Thus $Q_{n+1} \seq \conv (A)$ and the induction is complete.
	\end{proof}

	\section{Declaration of AI Use}
	The original example was found by a single prompt to ChatGPT Sol 5.6 on Ultra mode. Further interaction with ChatGPT Sol
	5.6 was carried out to verify its correctness and gain understanding of the construction. The refined three-point
	example was found by further prompting ChatGPT Astra on Extra High mode. Again, extended interaction 
	between the author and the AI was used to simplify the smaller example's presentation.
	The author has carefully expanded and clarified the AI-generated claims to obtain the paper in its current form. 
	The contents of the proofs follow closely the strategy laid out by the AI, and interaction with the AI 
	contributed to the author's ability to properly justify the arguments. The author chose specific concepts to illustrate and
	used AI to generate the figures. The actual writing and organization of the paper 
	have been done without the use of AI, with the intent of prioritizing understanding and readability. 
	The author takes full responsibility for its contents.
	\bibliographystyle{plain}
	\small
	\bibliography{ref}
\end{document}